\journal{Communications in Computational Physics}
\renewcommand{\vec}[1]{{\bf #1}}
\def\@thmcountersep{.}
\newtheorem{theorem}{Theorem}
\newtheorem{definition}{Definition}
\newtheorem{remark}{Remark}
\tikzset{every label/.style={font=\footnotesize,inner sep=1pt}}
\newcommand{\cellcenter}[4][]{\node[label={below:#4},#1] at (#2) (#3) {}}
\newcommand{\interface}[4][]{\node[label={below:#4},#1] at (#2) (#3) {};}
\newcommand{\source}[1]{s_{#1}}
\newcommand{\sourcedef}[1]{\footnotesize $\rho_{#1}^{\rm rec}g_{#1}^{\rm int}$}
\def\pt@get#1#2{
\tikz@scan@one@point\pgfutil@firstofone#2\relax%
\csname pgf@x#1\endcsname=\pgf@x%
\csname pgf@y#1\endcsname=\pgf@y%
}
\tikzset{
parabola through/.style={
to path={{[x={(\pgf@xc,\pgf@yc)}, y=\parabola@y, shift=(\tikztostart)]
-- (0,0) .. controls (1/3,1/3) and (2/3,1/3) .. (1,0) \tikztonodes}--(\tikztotarget)}
},
parabola through/.prefix code={
\pt@get{a}{(\tikztostart)}\pt@get{b}{#1}\pt@get{c}{(\tikztotarget)}%
\advance\pgf@xb by-\pgf@xa\advance\pgf@yb by-\pgf@ya%
\advance\pgf@xc by-\pgf@xa\advance\pgf@yc by-\pgf@ya%
\pgfmathsetmacro\parabola@y{(\pgf@yc-\pgf@xc/\pgf@xb*\pgf@yb)%
/(\pgf@xb-\pgf@xc)*\pgf@xc}%
}
}
\newcolumntype{L}[1]{>{\raggedright\arraybackslash}p{#1}} 
\newcolumntype{C}[1]{>{\centering\arraybackslash}p{#1}} 
\newcolumntype{R}[1]{>{\raggedleft\arraybackslash}p{#1}} 
\newcolumntype{Y}{>{\centering\arraybackslash}X} 
\newcolumntype{Z}{>{\raggedleft\arraybackslash}X} 
\def\@thmcountersep{.}
\newcommand{\manuallabel}[2]{\def\@currentlabel{#2}\label{#1}}
\newcounter{subcount}
\newcommand{\refb}[1]{#1}
\DeclareMathAlphabet{\mathitbf}{OML}{cmm}{b}{it}
\newcommand\RedeclareMathOperator{%
\@ifstar{\def\rmo@s{m}\rmo@redeclare}{\def\rmo@s{o}\rmo@redeclare}%
}
\newcommand\rmo@redeclare[2]{%
\begingroup \escapechar\m@ne\xdef\@gtempa{{\string#1}}\endgroup
\expandafter\@ifundefined\@gtempa
{\@latex@error{\noexpand#1undefined}\@ehc}%
\relax
\expandafter\rmo@declmathop\rmo@s{#1}{#2}}
\newcommand\rmo@declmathop[3]{%
\DeclareRobustCommand{#2}{\qopname\newmcodes@#1{#3}}%
}
\DeclareMathSymbol{\alpha}{\mathalpha}{letters}{"0B}
\DeclareMathSymbol{\beta}{\mathalpha}{letters}{"0C}
\DeclareMathSymbol{\gamma}{\mathalpha}{letters}{"0D}
\DeclareMathSymbol{\delta}{\mathalpha}{letters}{"0E}
\DeclareMathSymbol{\epsilon}{\mathalpha}{letters}{"0F}
\DeclareMathSymbol{\zeta}{\mathalpha}{letters}{"10}
\DeclareMathSymbol{\eta}{\mathalpha}{letters}{"11}
\DeclareMathSymbol{\theta}{\mathalpha}{letters}{"12}
\DeclareMathSymbol{\iota}{\mathalpha}{letters}{"13}
\DeclareMathSymbol{\kappa}{\mathalpha}{letters}{"14}
\DeclareMathSymbol{\lambda}{\mathalpha}{letters}{"15}
\DeclareMathSymbol{\mu}{\mathalpha}{letters}{"16}
\DeclareMathSymbol{\nu}{\mathalpha}{letters}{"17}
\DeclareMathSymbol{\xi}{\mathalpha}{letters}{"18}
\DeclareMathSymbol{\pi}{\mathalpha}{letters}{"19}
\DeclareMathSymbol{\rho}{\mathalpha}{letters}{"1A}
\DeclareMathSymbol{\sigma}{\mathalpha}{letters}{"1B}
\DeclareMathSymbol{\tau}{\mathalpha}{letters}{"1C}
\DeclareMathSymbol{\upsilon}{\mathalpha}{letters}{"1D}
\DeclareMathSymbol{\phi}{\mathalpha}{letters}{"1E}
\DeclareMathSymbol{\chi}{\mathalpha}{letters}{"1F}
\DeclareMathSymbol{\psi}{\mathalpha}{letters}{"20}
\DeclareMathSymbol{\omega}{\mathalpha}{letters}{"21}
\DeclareMathSymbol{\varepsilon}{\mathalpha}{letters}{"22}
\DeclareMathSymbol{\vartheta}{\mathalpha}{letters}{"23}
\DeclareMathSymbol{\varpi}{\mathalpha}{letters}{"24}
\DeclareMathSymbol{\varrho}{\mathalpha}{letters}{"25}
\DeclareMathSymbol{\varsigma}{\mathalpha}{letters}{"26}
\DeclareMathSymbol{\varphi}{\mathalpha}{letters}{"27}
\RedeclareMathOperator{\div}{\textbf{div}}
\renewcommand{\epsilon}{\varepsilon}
\renewcommand{\d}{{\rm d}}
\newcommand{\order}{\ensuremath{\mathcal{O}}}
\newcommand{\half}{\ensuremath{\frac{1}{2}}}
\newcommand{\del}{\partial}
\newcommand{\rec}{\ensuremath{\text{rec}}}
\newcommand{\x}{\ensuremath{\vec x}\xspace}
\newcommand{\vel}{\ensuremath{\vec v}\xspace}
\newcommand\ie{i.e.\ }
\newcommand\eg{e.g.\ }
\let\origdoublepage\cleardoublepage
\renewcommand{\cleardoublepage}{%
\clearpage{\pagestyle{empty}\origdoublepage}}
\newcommand{\ChapterOutsidePart}{%
\def\toclevel@chapter{-1}\def\toclevel@section{0}\def\toclevel@subsection{1}}
\newcommand{\ChapterInsidePart}{%
\def\toclevel@chapter{0}\def\toclevel@section{1}\def\toclevel@subsection{2}}
\newcommand{\dt}{\partial_t}
\newcommand{\tdt}{\frac{d}{dt}}
\newcommand{\dx}{\partial_x}
\newcommand{\dy}{\partial_y}
\newcommand{\q}{\vec q}
\newcommand{\w}{\vec w}
\newcommand{\s}{\vec s}
\newcommand{\f}{\vec f}
\newcommand{\g}{\vec g}
\newcommand{\Q}{\vec Q}
\newcommand{\W}{\vec W}
\renewcommand{\S}{\vec S}
\newcommand{\F}{\vec F}
\newcommand{\gx}{g_x}
\newcommand{\gy}{g_y}
\newcommand{\eq}{\text{eq}}
\newcommand{\dwb}{DWB\xspace}
\newcommand{\la}{LA\xspace}
\newcommand{\laeos}{LA\xspace}
\newcommand{\laeosf}{LA-S\xspace}
\newcommand{\dwbeos}{DWB\xspace}
\newcommand{\dwbeosf}{DWB-S\xspace}
\newcommand{\tdwb}{discretely well-balanced method\xspace}
\newcommand{\tla}{local approximation method\xspace}
\begin{document}


\begin{frontmatter}

\title{%
	High order discretely well-balanced method for arbitrary hydrostatic atmospheres
}
\author[<1>]{Jonas P.\ Berberich\corref{cor1}}
\ead{jonas.berberich@mathematik.uni-wuerzburg.de}
\author[<2>]{Roger K\"appeli} 
\ead{roger.kaeppeli@sam.math.ethz.ch}
\author[<3>]{Praveen Chandrashekar} 
\ead{praveen@tifrbng.res.in}
\author[<1>]{Christian Klingenberg}
\cortext[cor1]{Corresponding author: 
Tel.: +49 931 31-88861;  
Fax: +49 931 31-83494;}
\ead{klingen@mathematik.uni-wuerzburg.de}
\address[<1>]{Dept.~of Mathematics, Univ.~of W\"urzburg, Emil-Fischer-Stra{\ss}e 40, 97074 W\"urzburg, Germany}
\address[<2>]{Seminar for Applied Mathematics (SAM),
            Department of Mathematics,
            ETH Z\"urich, CH-8092 Z\"urich, Switzerland}
\address[<3>]{TIFR Center for Applicable Mathematics, Bengaluru, Karnataka 560065, India}


\begin{abstract}
We introduce novel high order well-balanced finite volume methods for the full compressible Euler system with gravity source term. 
They require no \`a priori knowledge of the hydrostatic solution which is to be well-balanced and are not restricted to certain classes of hydrostatic solutions.
In one spatial dimension we construct a method that exactly balances a high order discretization of any hydrostatic state. The method is extended to two spatial dimensions using a local high order approximation of a hydrostatic state in each cell. The proposed simple, flexible, and robust methods are not restricted to a specific equation of state. Numerical tests verify that the proposed method improves the capability to accurately resolve small perturbations on hydrostatic states.
\end{abstract}

\begin{keyword}
finite-volume methods \sep well-balancing \sep hyperbolic balance laws \sep compressible Euler equations with gravity 
\end{keyword}

\end{frontmatter}



\section{Introduction}
\label{sec:intro}

In many applications, the compressible Euler equations arise as a model for flow of inviscid compressible fluids such as air. Finite volume methods are commonly utilized to numerically approximate solutions of this system since they are conservative and capable of resolving shocks by construction. Fluid dynamics in atmospheres can be modeled by adding a gravity source term to the Euler system. This model admits non-trivial static, i.e., time independent solutions, the \emph{hydrostatic solutions}. They are described by the \emph{hydrostatic equation}
\begin{equation}
\label{eq:hystat}
\vel=0, \qquad	\nabla p = \rho\g
\end{equation}
which models the balance between the gravity $\rho \g$, where $\rho$ is the gas \emph{density} and $\g$ is the \emph{gravitational acceleration}, and the pressure gradient $\nabla p$, where $p$ is the gas \emph{pressure}. Additionally, the solution must satisfy the constitutive relation between pressure, density, and internal energy density $\varepsilon$. This relation is called \emph{equation of state} (EoS) and it has to be added to the Euler system to close it. In many practical simulations, the dynamics are considered which are close to a hydrostatic state. Standard finite volume methods usually introduce truncation errors to hydrostatic states, which can be larger than the actual perturbations related to the simulated dynamical process. Hence, the small-scale dynamics can only be resolved on very fine grids, which leads to high computational cost. This creates the demand for so-called \emph{well-balanced} methods which are constructed to be free of a truncation error at hydrostatic states.

The idea of well-balanced methods is very common especially for the shallow water equations with bottom topography. The hydrostatic solution for the shallow water equations, the so-called lake-at-rest solution, can be given in the form of an algebraic relation. This favors the construction of well-balanced methods since the algebraic relation can be used to perform a local hydrostatic reconstruction, which is the main tool to construct well-balanced methods. Examples can be found in \cite{Bermudez1994,LeVeque1998b,Audusse2004,Noelle2006,Noelle2007,Xing2011,Castro2018} and references therein. Also, for the Ripa model, which is closely related to shallow water model, there are well-balanced methods (\eg \cite{Touma2015,Desveaux2016b} and references therein).

For the compressible Euler equations with gravity source term, the situation is more complicated, since the hydrostatic states are not given by an algebraic relation but by a differential equation (\cref{eq:hystat}) together with an EoS. 
Especially complicated EoS can increase the difficulty of performing a local hydrostatic reconstruction. The result is that there do not exist methods which are well-balanced for all EoS and all types of hydrostatic solutions, and all existing methods so far known for Euler equations with gravity bear some restriction. \refb{We can classify well-balanced methods broadly into three types, which may help in understanding their differences and limitations and their domain of usefulness.}

\refb{%
In the first type of well-balancing approaches, \`a priori knowledge of the hydrostatic state which is to be well-balanced is assumed.
	\begin{definition}[Type 1]
		\label{def:wb_type1}
		A numerical method is well-balanced (type 1) if it exactly preserves any hydrostatic state that is given as analytical formula or in terms of discrete data on the grid.
	\end{definition}
	This allows the methods to be general, such that they can balance arbitrary hydrostatic states to arbitrary EoS~\cite{Berberich2016,Thomann2019,Berberich2019,Kanbar2020}. High order methods of this type are given in~\cite{Castro2020} for one spatial dimension and in~\cite{Klingenberg2019} for two spatial dimensions. The most general well-balanced method of this type is presented in~\cite{Berberich2020}. It can be applied to balance any stationary solution and follow any time-dependent solution exactly in a high order method. It can be utilized for any multi-dimensional hyperbolic balance law. In many applications the hydrostatic solution which is of interest is known \`a priori, either analytically or as discrete data.

The second type of methods is developed to well-balance certain classes of hydrostatic solutions. 
	\begin{definition}[Type 2]
		\label{def:wb_type2}
		A numerical method is well-balanced (type 2) if it exactly preserves hydrostatic states that satisfy a certain barotropic condition. 
	\end{definition}
	Examples for these barotropic conditions are constant temperature or entropy or a polytropic relation between density and pressure.
Other hydrostatic states still lead to truncation errors. Well-balanced methods of type 2 are given in \cite{Cargo1994,LeVeque1999,LeVeque2011,Desveaux2014,Kaeppeli2014,Chandrashekar2015,Desveaux2016,Ghosh2016,Touma2016} and there are also higher order methods~\cite{Xing2013,Grosheintz2019}. The restriction to certain classes of hydrostatic solutions usually also manifests in a restriction to the EoS, which is in most cases the ideal gas EoS. The presence of such additional condition provides an algebraic relation which can be used to perform hydrostatic reconstruction.

	However, there are also situations, in which a more complicated structure can be expected for the hydrostatic solution and no \`a priori knowledge can be assumed. If the relevant hydrostatic state is not known and cannot be expected to be of one of the classes balanced in the first type of methods (for example because a complicated non-ideal gas EoS is used), a method of the third type  can be useful.
		\begin{definition}[Type 3 - discretely well-balanced methods]
			\label{def:wb_type3}
			A numerical method is well-balanced (type 3) if it exactly preserves a discrete approximation of a hydrostatic state without additional assumptions and \`a priori knowledge of the hydrostatic solution. We then refer to the method as \emph{discretely well-balanced} method.
		\end{definition}
		These methods are based on balancing some approximation to a hydrostatic state. The second order methods introduced in~\cite{Kaeppeli2014,Kaeppeli2016,Varma2019}, for example, are exactly well-balanced for certain classes of hydrostatic states; otherwise they balance a second order discretization to any hydrostatic state.
		A high order method of this type has been presented in~\cite{Franck2016} in the framework of Lagrangian+remap schemes for the ideal gas EoS. This article also formally introduces the concept of discretely well-balanced methods.
		There are also methods, which balance a global approximation to any hydrostatic state \cite{Cargo1994,Chertock2018a}, rather than a local one.
	}
		
		Note that this classification into three different approaches is not strict. Some methods of the third type, for example, can also be seen to be of the \refb{second} type, since they also might balance certain classes of solutions exactly~\cite{Kaeppeli2014,Varma2019}. \refb{Recently, methods have been developed that are able to balance stationary solutions of the Euler equations with non-vanishing velocity~\cite{Gaburro2018b,Berberich2020,Grosheintz2020}. In this article, however, we only consider hydrostatic states, i.e., stationary solutions with zero velocity.}
		
		In the present article, we develop a method of type 3 following the approach of balancing a local approximation to any hydrostatic state. A high order accurate local discrete approximation to a hydrostatic state is constructed based on the local distribution of density and gravitational acceleration. Well-balancing is then achieved using hydrostatic reconstruction in the manner of~\cite{Audusse2004} and a suitable source term discretization. The construction of our method allows for general EoS. For one spatial dimension it can be shown that a high order discretization of any hydrostatic state can be well-balanced exactly. The method is extended to two spatial dimensions using a local approximation to the hydrostatic state in each cell. Numerical tests validate a significant increase of accuracy on small perturbations to hydrostatic states. An increased order of accuracy in the convergence to exact hydrostatic states is observed in one and two spatial dimensions. The method allows a free choice of the components of a Runge--Kutta finite volume scheme, including reconstruction method, numerical flux function, and ODE solver. The only restriction is that the numerical flux function has to satisfy the contact preservation property. The methods can be implemented as simple modifications to existing Runge--Kutta finite volume codes.
		
		In this paper, we extend the one-dimensional well-balanced finite volume
		schemes \cite{Kaeppeli2015,Kaeppeli2016}
		beyond second-order accuracy. The proposed scheme possesses the following novel set of features and properties:
		\begin{enumerate}
			\item[(i)] An arbitrary high-order accurate one-dimensional local hydrostatic
			profile is constructed without any explicit assumption on the thermal
			stratification such as the entropy, temperature or any other barotropic
			relation.
			\item[(ii)] An arbitrarily high-order accurate one-dimensional equilibrium
			preserving reconstruction based on the local equilibrium profile is
			built.
			\item[(iii)] A source term discretization is designed such that the scheme is
			well-balanced for gravitational forces aligned with one computational
			axis, i.e., the scheme preserves a discrete high-order approximation of
			one-dimensional hydrostatic equilibrium.
			\item[(iv)] The scheme requires only standard components: a high-order reconstruction
			procedure and a consistent and Lipschitz continuous numerical flux
			function capable of resolving stationary contact discontinuities
			exactly.
			Hence, it can be implemented with ease within any standard high-order
			finite volume code.
			\item[(v)] The scheme can handle any form of equation of state including tabulated,
			which is particularly important in astrophysical applications.
			\item[(vi)] Although the scheme is not well-balanced for general multi-dimensional
			hydrostatic configurations, the presented numerical experiments
			demonstrate the nevertheless vastly increased resolution capabilities.
		\end{enumerate}

		The rest of the article is structured as follows.
		In \cref{subsec:nm_1d}, the one-dimensional compressible Euler equations with gravity source term are introduced. A standard high order finite volume method for these is revised in \cref{subsubsec:stan}. In \cref{sec:discrete_wb} we develop the \tdwb (\dwb) for an ideal gas law and add the description for general EoS in \cref{sec:discrete_wb_eos}. The well-balanced property and high order accuracy are stated in \cref{thm:nm_1d}. The method is modified in \cref{sec:compact_method} to reduce the stencil and the \tla (\la) is obtained.  In \cref{sec:ne}, numerical experiments in one spatial dimension are conducted. It is verified numerically that the \dwb method is exactly well-balanced on the discrete approximation to a hydrostatic state and \dwb and \la methods are numerically shown to converge towards exact hydrostatic states with an increased order of convergence  (\cref{sec:1d_isothermal}). The capability of the methods to accurately resolve small perturbations on hydrostatic states is illustrated in \cref{sec:1d_isothermal_pert}. Tests with a non-ideal gas EoS are presented in \cref{sec:1d_non_ideal}. In \cref{sec:robustness} we verify the robustness of the methods in the presence of discontinuities.
		\Cref{sec:2d} is dedicated to the extension to two spatial dimensions. In \cref{sec:eul_2d} we introduce the two-dimensional Euler equations with gravity source term. Subsequently, the \la method is extended to two spatial dimensions (\cref{sec:wb_2d}). In \cref{sec:ne_2d}, numerical tests of the \la method in two spatial dimensions are presented. On a two-dimensional polytrope, which is a hydrostatic state, the increased order of accuracy is observed also for two spatial dimensions (\cref{sec:test_polytrope}). A perturbation is added to the polytrope in \cref{sec:test_polytrope_pert}. Rayleigh--Taylor instabilities on a radial setup are simulated in \cref{sec:rrt} and the increased accuracy of the \la method is shown. In \cref{sec:conclusions} we close the article with some conclusions and an outlook.

		\section{One-dimensional finite volume methods}\label{sec:nm}
		\subsection{One-dimensional compressible Euler equations with gravity}\label{subsec:nm_1d}
		We consider the one-dimensional compressible Euler equations with gravitation
		in Cartesian coordinates and write them in the following compact form
		\begin{equation}
		\label{eq:nm_1d_0010}
		\del_t\q + \del_x\f = \s
		,
		\end{equation}
		where
		\begin{align}
		\label{eq:nm_1d_0020}
		\q = \begin{pmatrix}
		\rho   \\
		\rho u \\
		E
		\end{pmatrix}
		, \quad
		\f(\q) = \begin{pmatrix}
		\rho u     \\
		\rho u^2+p \\
		u(E+p)
		\end{pmatrix}
		\quad \text{and} \quad
		\s(\q,g) =  \begin{pmatrix}
		0        \\
		\rho g   \\
		\rho u g
		\end{pmatrix}
		\end{align}
		are the vectors of conserved variables, fluxes and source terms,
		respectively.
		Moreover, the total energy is given by $E = \varepsilon + \rho u^2/2$ and we denote the primitive variables by $\w=[\rho,u,p]^T$.
		The equation of state (EoS) closes the system by relating the pressure $p$
		to the mass density $\rho$ and internal energy density $\varepsilon$,
		\ie $p = p(\rho,\varepsilon)$.
		A simple EoS is provided by the ideal gas law
		\begin{equation}
		\label{eq:nm_1d_0030}
		p = (\gamma - 1) \varepsilon
		,
		\end{equation}
		where $\gamma$ is the ratio of specific heats.
		However, we stress that the well-balanced schemes elaborated below are not
		restricted to any particular EoS, which is important especially in
		astrophysical applications.
		\subsection{Standard high-order finite volume methods}
		\label{subsubsec:stan}
		In this section, we briefly describe an high order accurate standard high-order finite volume scheme in order
		to fix the notation and set the stage for the detailed presentation of our
		novel well-balanced schemes. The spatial domain of interest $\Omega$ is discretized into a finite number
		$N$ of cells or finite volumes
		$\Omega_{i} = \left[x_{i-\half} ,x_{i+\half}\right]$, $i=1,\ldots,N$. \
		The $x_{i\mp\half}$ denote the left/right cell interfaces and the point
		$x_i = \left(x_{i-\half} + x_{i+\half}\right)/2$ is the cell center.
		
		A semi-discrete finite volume scheme is then obtained by integrating
		\cref{eq:nm_1d_0010} over a cell $\Omega_{i}$
		\begin{equation}
		\label{eq:nm_1d_0040}
		\dt\hat\Q_i(t)
		= \mathcal{L}\left(\hat\Q_i\right)
		= - \frac{1}{\Delta x} \left[ \F_{i+\half} -\F_{i-\half}\right]
		+ \hat\S_{i}
		.
		\end{equation}
		Here $\hat\Q_{i}$ denotes the approximate cell average of the conserved
		variables in cell $\Omega_{i}$ at time $t$
		\begin{equation}
		\label{eq:nm_1d_0050}
		\hat\Q_i(t) \approx \hat\q_i(t)
		= \frac{1}{\Delta x} \int_{\Omega_{i}} \q(x,t) ~ dx
		,
		\end{equation}
		where $\hat\q_i(t)$ denotes the cell average of the exact solution $\q(x,t)$.
		In the following, a quantity with a hat $\hat\cdot$ indicates a cell average
		and one without a hat indicates a point value.
		Note that this distinction is essential for higher-order methods of order
		greater than two.
		Likewise, $\hat\S_{i}$ denotes the approximate cell average of the source
		terms at time $t$
		\begin{equation}
		\label{eq:nm_1d_0060}
		\hat\S_i(t) \approx \hat\s_i(t)
		= \frac{1}{\Delta x}
		\int_{\Omega_{i}} \s\left(\q(x,t),g(x)\right) ~ dx
		,
		\end{equation}
		where $\hat\s_i(t)$ denotes the cell average of the exact source terms
		$\s(\q,g)$.
		\paragraph{Reconstruction}
		As the basic unknowns in the finite volume method are cell average values, we need some reconstruction procedure to recover the detailed spatial variation of the solution in order to obtain high order accuracy. 
		We denote a reconstruction procedure that recovers an $m$-th ($m$ odd) order accurate
		point value of the conserved variables at location $x$ within cell $\Omega_{i}$
		from the cell averages by
		\begin{equation}
		\label{eq:nm_1d_0071}
		\Q_{i}^\text{rec}(x) = \mathcal{R}\left( x;
		\left\{\hat\Q_{j}\right\}_{j \in \mathcal S_{i}}
		\right)
		.
		\end{equation}
		where 
		$\mathcal S_{i} = \left\{i-\frac{m-1}2,\dots,i,\dots,i+\frac{m-1}2 \right\}$ is the stencil of the
		reconstruction. 
		
		Many such reconstruction procedures have been developed in the literature,
		and a non-exhaustive list includes the Total Variation Diminishing (TVD) and
		the Monotonic Upwind Scheme for Conservation Laws (MUSCL) methods
		~\cite{VanLeer1979,Harten1983b,Sweby1984,Laney1998,LeVeque2002,Toro2009},
		the Piecewise Parabolic Method (PPM)~\cite{Colella1984},
		the Essentially Non-Oscillatory (ENO)~\cite{Harten1987},
		Weighted ENO (WENO) (see~\cite{Shu2009} and references therein) and
		Central WENO (CWENO) methods~\cite{Cravero2018}.
		
		In this paper, we will use the $m$-th order CWENO reconstructions~\cite{Cravero2018} with $m$ being an odd integer. A distinctive feature of CWENO reconstruction is that it provides an analytic expression for the reconstruction polynomial and not just a value at some particular point within the cell. This is particularly useful for solving balance laws which require the source terms to be integrated over each cell.  In particular, for the numerical tests, we will use the third-order accurate CWENO3 from~\cite{Kolb2014} and the fifth-order accurate CWENO5
		from~\cite{Capdeville2008}.
		However, the schemes developed in this paper are not restricted to this
		choice of reconstruction schemes as long as the reconstruction yields a function available in the whole cell and extendable to neighboring cells.
		\paragraph{Numerical flux}
		At each cell face, we have obtained two solution values from the reconstructions in the two surrounding cells. The flux across the $i+\half$ face is obtained from a numerical flux formula $\mathcal{F}$,
		\begin{equation}
		\label{eq:nm_1d_0080}
		\F_{i+\half} = \mathcal{F}\left(\Q^\text{rec}_{i}\left(x_{i+\half}\right),\Q^\text{rec}_{i+1}\left(x_{i+\half}\right)\right)
		,
		\end{equation}
		which is usually based on (approximately) solving Riemann problems at cell interfaces.
		The numerical flux function $\mathcal{F}$ is required to be consistent,
		i.e.\ $\mathcal{F}(\Q,\Q) = \f(\Q)$ and Lipschitz continuous. Moreover, we will require that it satisfies the \emph{contact property}.
		\begin{definition}[Contact property]
			\label{def:contact_property}
			Let $\rho_{L}$ ($\rho_{R}$) be the density on the left (right) side of a
			contact discontinuity and $p$ the constant pressure.
			A numerical flux function $\mathcal{F}$ for the one-dimensional Euler
			equations that satisfies the condition
			\begin{equation}
			\mathcal{F}\left(  \Q(\rho_{L}, 0, p)
			, \Q(\rho_{R}, 0, p)
			\right)
			=
			\left[ 0, p, 0 \right]^{T}
			\label{eq:contact_property}
			\end{equation}
			is said to have the \emph{contact property}. Here, $\Q = \Q(\rho,u,p)$ denotes the transformation from primitive to conserved variables.
		\end{definition}
		This property ensures the ability of a numerical flux to exactly capture
		stationary contact discontinuities of the Euler equations.
		In our tests below, we will use the well-known approximate Riemann solver by
		Roe~\cite{Roe1981}.
		Another well-known flux with the contact property is the HLLC
		flux~\cite{Toro1994}.
		The Rusanov flux is not able to capture contact discontinuities since it does
		not satisfy \cref{def:contact_property}. Note, that the contact property is, besides consistency, the only requirement for a numerical flux to use with the well-balanced methods proposed in this article. This gives a lot of freedom, since there are many contact property satisfying numerical fluxes with different properties available in literature. This includes for example entropy stable fluxes (\eg \cite{Ismail2009,Chandrashekar2013,Berberich2020b}) and numerical fluxes suitable for the simulation of low Mach number flows (\eg \cite{Turkel1987,Li2008,Li2013,Miczek2015,Barsukow2017,Berberich2020b})
		\paragraph{Source term discretization}
		We assume that the gravitational acceleration is given and can be
		evaluated anywhere in the computational domain as needed by the scheme.
		
		If the gravitational acceleration $g(x)$ is a given function, it can be
		evaluated directly at the quadrature nodes.
		If the gravitational acceleration $g(x)$ is only known at discrete points,
		then a suitable interpolation can be used.
		Note that the interpolation needs to be at least as accurate as the desired
		design order of the scheme.
		In the numerical examples below, and also in the construction of our well-balanced method, we assume that point values $g_{i}$ of gravitational
		acceleration are given at all cell centers $x_i$.
		These point values are then interpolated 
		using sufficiently high order polynomial interpolation. 
		In an $m$-th order accurate method we define
		\begin{equation}
		\label{def:gint_1d}
		g_i^\text{int}(x)=\sum_{k=0}^{m-1}a_k(x-x_i)^k
		\qquad\text{for}\qquad
		x\in\Omega_{i},
		\end{equation}
		where the coefficients $a_0,\dots,a_{m-1}\in \mathbb R$ are defined via demanding
		\begin{equation}
		\label{def:gint_1d_coeffs}
		g_i^\text{int}(x)=g_j
		\qquad\text{for}\qquad
		j\in\left\{ i-\frac{m-1}2,\dots,i+\frac{m-1}2 \right\}.
		\end{equation}
		If the gravitational acceleration is not smooth enough, a CWENO interpolation can be applied instead.
		An accurate approximation of the cell average of the source terms is then obtained
		by integration:
		\begin{equation}
		\label{eq:nm_1d_0100}
		\hat\S_{i}(t) 
		=\frac{1}{\Delta x}
		\int_{\Omega_{i}}
		\begin{pmatrix}
		0\\
		\s_i(x)\\
		\frac{(\rho u)_i^{\rm rec}(x)}{\rho_i^{\rm rec}(x)}\s_i(x)
		\end{pmatrix}
		~ dx
		\qquad\text{with}\qquad
		\s_i(x)=\rho_i^\rec(x)g_i^{\rm int}(x),
		\end{equation}
		where $\rho_i^\rec$,and $(\rho u)_i^\rec$ are obtained from the
		reconstruction procedure \cref{eq:nm_1d_0071}.
		In this article we use exact integration which is possible because of the way we obtain $g^\text{int}$. In general, however, exact integration is not always possible. In that case a sufficiently accurate quadrature rule has to be applied.
		%
		\paragraph{Time-stepping}
		For the temporal integration, the time domain of interest $[0,T]$ is
		discretized into time steps $\Delta t = t^{n+1} - t^{n}$, where the
		superscripts label the different time levels.
		The semi-discrete scheme \cref{eq:nm_1d_0040} is evolved in time using some
		ODE integrator.
		For this purpose, we apply explicit Runge--Kutta methods.
		To achieve third-order accuracy in time, we use a third-order accurate, four
		stage explicit Runge--Kutta method \cite{Kraaijevanger1991}.
		To achieve fifth-order accuracy in time, we apply a fifth-order accurate
		Runge--Kutta method (the standard-method from \cite{Rabiei2012}).
		Furthermore, the time step $\Delta t$ is required to fulfill a CFL condition
		for stability.
		
		This concludes the description of a standard high-order accurate finite volume scheme.
		We refer to the many excellent available textbooks for further information
		and detailed derivation, see~\cite{Laney1998,LeVeque2002,Hirsch2007,Toro2009}.
		However, a standard finite volume method is in general not able to exactly
		balance hydrostatic equilibrium solutions.
		Next, we present the necessary modifications to the reconstruction procedure,
		which render this precise balance possible, and result in a family of
		high-order well-balanced schemes able to preserve a high-order accurate
		discrete form of the equilibrium.
		
		\subsection{The \tdwb}
		\label{sec:discrete_wb}
		The main idea of achieving well-balancing lies in a carefully designed reconstruction process in two steps.
		In the first step, a local equilibrium profile is determined within each cell
		that is consistent up to the desired order of accuracy with the cell-averaged
		conserved variables.
		We emphasize that this step is the principal novelty of the presented
		schemes.
		It generalizes the second-order schemes \cite{Kaeppeli2015,Kaeppeli2016},
		able to preserve discrete hydrostatic states without any explicit assumption
		of thermal equilibrium, to arbitrary orders of accuracy.
		In the second step, the cell's equilibrium profile is extrapolated to its
		neighboring cells and a high-order reconstruction of the equilibrium
		perturbation is performed.
		

		
		\paragraph{Step 1}  We begin by the construction of the local high-order equilibrium profile within the $i$-th cell
		\begin{equation}
		\label{eq:nm_1d_wb_0010}
		\Q_{i}^{\eq}(x) = \begin{pmatrix}
		\rho_{i}^{\eq}(x)      \\
		0                      \\
		\varepsilon_{i}^{\eq}(x)
		\end{pmatrix}
		\end{equation}
		fulfilling
		\begin{equation}
		\label{eq:nm_1d_wb_0020}
		\frac{\d p_{i}^{\eq}}{\d x}
		= s_{i}(x)
		.
		\end{equation}
		We define the local equilibrium source term related to the $i$-th cell as 
		\begin{equation}
		\label{def:source_term_dwb}
		s_{i}(x) = \sum_{k\in\mathcal S_i} \rho_{k}^{\eq}(x)g_i^\text{int}(x) ~ \mathbbm{1}_{\Omega_{k}}(x) \qquad\textrm{and}\qquad
		\mathbbm{1}_{\Omega_{k}}(x) = \begin{cases}
		1 \text{ if } x \in     \Omega_{k} \\
		0 \text{ if } x  \notin \Omega_{k}
		\end{cases},
		\end{equation}
		where $\mathcal S_i$ is the stencil of the reconstruction $\mathcal R$.
		The equilibrium mass density $\rho_{i}^{\eq}(x)$, internal energy density
		$\varepsilon_{i}^{\eq}(x)$ and the pressure $p_{i}^{\eq}(x)$ are related
		through the EoS
		\begin{equation}
		\label{eq:nm_1d_wb_0030}
		\varepsilon^\eq_{i}(x) = \varepsilon \left( \rho_{i}^{\eq}(x)
		, p_{i}^\eq(x) \right)
		.
		\end{equation}
		As a matter of fact, the hydrostatic equilibrium stratification
		\cref{eq:nm_1d_wb_0010} is not uniquely specified by
		\cref{eq:nm_1d_wb_0020,eq:nm_1d_wb_0030} (indeed, we have three physical
		quantities and only two equations linking them).
		To fully determine the equilibrium, one additional relation is needed.
		For that purpose, we demand that the local equilibrium density profile
		$\rho_{i}^{\eq}(x)$ corresponds to the density profile obtained from the
		standard reconstruction procedure \cref{eq:nm_1d_0071}
		\begin{equation}
		\label{eq:nm_1d_wb_0040}
		\rho_{i}^{\eq}(x) = \rho_{i}^{\text{rec}}(x)
		.
		\end{equation}
		Note that this is the major difference with well-balanced schemes assuming an
		explicit thermal equilibrium, e.g., isothermal, isentropic, polytopic or, in
		general, barotropic (density is a function of pressure only) conditions. The density profile can be arbitrary and we do not impose any further restriction on the structure of the hydrostatic solution.
		
		Now, we construct the equilibrium pressure profile within the $i$-th cell by simply
		integrating \cref{eq:nm_1d_wb_0020} as
		\begin{equation}
		\label{eq:nm_1d_wb_0050}
		p_{i}^{\eq}(x) = p_{0,i}
		+ \int_{x_{i}}^{x} s_{i}(\xi) ~ \d \xi
		,
		\end{equation}
		where $p_{0,i}$ is the point value of the pressure anchoring the equilibrium
		profile at cell center $x_{i}$.
		Note that the above integral can be evaluated analytically for any
		$x \in \Omega_{i}$ since the equilibrium density $\rho_{i}^{\eq}(x)$ (\cref{eq:nm_1d_wb_0040}) and the interpolated gravitational acceleration $g_i^\text{int}$ are
		simply polynomials.
		
		It remains to fix the equilibrium pressure $p_{0,i}$ at cell center.
		To this end, we require that the equilibrium profile $\Q_{i}^{\eq}(x)$
		matches the cell-averaged conserved variables $\hat\Q_{i}$ in the cell
		$\Omega_{i}$ up to the desired order of accuracy.
		For the equilibrium density, this is fulfilled by construction (since it is
		identical to the profile from the standard reconstruction procedure).
		For the internal energy density, the requirement leads to the following
		equation
		\begin{equation}
		\label{eq:nm_1d_wb_0060}
		\hat\varepsilon_{i}
		= \frac{1}{\Delta x}
		\mathcal{Q}_{i} \left( \varepsilon (\rho_{i}^{\eq}, p_{i}^{\eq}) \right)
		= \frac{1}{\Delta x}
		\sum_{\alpha=1}^{N_{q}}
		\omega_{\alpha}
		~
		\varepsilon (\rho_{i}^{\eq}(x_{i,\alpha}), p_{i}^{\eq}(x_{i,\alpha}))
		,
		\end{equation}
		where $\mathcal{Q}_{i}$ is a $q$-th order accurate
		quadrature rule with $\sum_{\alpha} \omega_{\alpha} = \Delta x$.
		Moreover, $\hat\varepsilon_{i}$ on the left-hand side is an estimate of the
		cell-averaged internal energy density in cell $\Omega_{i}$.
		\refb{%
		We estimate $\hat\varepsilon_{i}$ directly
		from the cell-averaged conserved variables as
		\begin{equation}
		\label{eq:nm_1d_wb_0070}
		\hat{\varepsilon}_{i} = \hat{E}_{i}
		- \frac{1}{2\Delta x} \mathcal Q_i\left(\frac{\left((\rho u)_{i}^\rec\right)^{2}}{{\rho}_{i}^\rec}\right)
		,
		\end{equation}
		where $(\rho u)_i^\rec$ has been obtained via the standard reconstruction procedure.
		}
		
		For readability we restrict to an ideal gas EoS in the following. The procedure for general EoS in described in \cref{sec:discrete_wb_eos}.
		For an ideal gas law, \cref{eq:nm_1d_wb_0060} can be solved analytically
		\begin{equation}
		\label{eq:nm_1d_wb_0080}
		\begin{aligned}
		p_{0,i} 
		& = (\gamma - 1) \hat{\varepsilon}_{i}
		- \frac{1}{\Delta x}
		\mathcal{Q}_{i} \left(
		\int_{x_{i}}^{x}
		s_{i}(\xi) ~ \d \xi
		\right) \\
		& = (\gamma - 1) \hat{\varepsilon}_{i}
		- \frac{1}{\Delta x}
		\sum_{\alpha=1}^{N_{q}}
		\omega_{\alpha}
		\int_{x_{i}}^{x_{i,\alpha}}
		s_{i}(\xi) ~ \d \xi
		,
		\end{aligned}
		\end{equation}
		where we used the ideal gas EoS (\cref{eq:nm_1d_0030}) and that for the quadrature weights we have $\sum_{\alpha} \omega_{\alpha} = \Delta x$.
		Again, since the equilibrium density $\rho_{i}^{\eq}(x)$ and the interpolated gravitational acceleration $g_i^{\rm int}$ are polynomials, the integral can be
		evaluated analytically in a straightforward manner.
		Now that the pressure at cell center $p_{0,i}$ is fixed, we have completely
		specified the high-order accurate local representation of the equilibrium
		conserved variables \cref{eq:nm_1d_wb_0010} within cell $\Omega_{i}$.

		\paragraph{Step 2}
		Next, we develop the high-order equilibrium preserving reconstruction
		procedure.
		To this end, we follow the existing methodology (see the references in the
		\cref{sec:intro}) and decompose in every cell the solution into an equilibrium
		and a (not necessarily small) perturbation part.
		The equilibrium part in cell $\Omega_{i}$ is simply given by
		$\Q_{i}^\eq(x)$ of Eq. \eqref{eq:nm_1d_wb_0010}.
		The perturbation part in cell $\Omega_{i}$ is obtained by applying the
		standard reconstruction procedure $\mathcal{R}$ to the cell-averaged
		equilibrium perturbation
		\begin{equation}
		\label{eq:nm_1d_wb_0110}
		\delta \Q_{i}^{\text{rec}}(x)
		= \mathcal{R}
		\left(
		x
		;
		\left\{ \hat\Q_{k}
		- \frac1{\Delta x}\mathcal Q_k\left(\Q_{i}^\eq\right)\right\}_{k \in \mathcal S_{i}}
		\right)
		\end{equation}
		We note that the cell average of the equilibrium perturbation in cell
		$\Omega_{k}$ is obtained by taking the difference between the cell average
		$\hat\Q_{k}$ and the cell average of the local high order equilibrium profile $\hat\Q^\eq_{i}$
		in cell
		$\Omega_{k}$.
		This construction results in a $\min(q,r)$-th order accurate representation of the equilibrium perturbation in cell $\Omega_{i}$.
		
		The complete equilibrium preserving reconstruction $\mathcal{W}$ is obtained by
		the sum of the equilibrium and perturbation reconstruction
		\begin{equation}
		\label{eq:nm_1d_wb_0120}
		\Q_{i}^{\text{rec}}(x) = \mathcal{W} \left( x ; \{  \hat\Q_{k} \}_{k \in \mathcal S_{i}} \right)
		= \Q_{i}^\eq(x) + \delta \Q_{i}^{\text{rec}}(x)
		.
		\end{equation}
		By construction, this reconstruction will preserve any equilibrium of the form
		\cref{eq:nm_1d_wb_0020} since the perturbation $\delta \Q_{i}^{\text{rec}}(x)$
		vanishes under these conditions.
		
		This concludes the description of the equilibrium preserving reconstruction
		procedure.
		Replacing only this component in a standard finite volume method renders it
		well-balanced for the above discrete form of arbitrary hydrostatic equilibrium,
		i.e., only a mechanical equilibrium and no thermal equilibrium needs to be
		explicitly assumed.
		In the rest of the article, we will refer to the method introduced in this
		section as discretely well-balanced (\dwb) method.

		\subsubsection{The \tdwb for arbitrary EoS}
		\label{sec:discrete_wb_eos}
		
		In this section, we present the details for using the developed well-balanced
		schemes with a general EoS.
		In that case, \cref{eq:nm_1d_wb_0060} is not explicitly solvable for the
		cell center equilibrium pressure $p_{0,i}$ in cell $\Omega_{i}$, which can be rewritten as
		\begin{equation}
		\label{eq:appendix_wb_rec_0010}
		f(p_{0,i}) = 0
		,
		\end{equation}
		where
		\begin{equation}
		\label{eq:appendix_wb_rec_0020}
		f(p)
		=   \hat\varepsilon_{i}
		- \frac{1}{\Delta x}
		\sum_{\alpha=1}^{N_{q}}
		\omega_{\alpha}
		~
		\varepsilon \left(  \rho_{i}^{\eq}(x_{i,\alpha})
		, ~ p
		+ \int_{x_{i}}^{x_{i,\alpha}}
		s_{i}(\xi)
		~ \d \xi
		\right)
		.
		\end{equation}
		Here, $\hat\varepsilon_{i}$ is the estimate of the cell-averaged internal
		energy density \cref{eq:nm_1d_wb_0070} and $\mathcal{Q}_{i}$ is the
		previously introduced $q$-th order accurate quadrature rule over $\Omega_{i}$
		with nodes $x_{i,\alpha}$ and weights $\omega_{\alpha}$.
		We again stress that the equilibrium density $\rho_{i}^{\eq}$ and the
		gravitational acceleration $g_i^{\rm int}$ are polynomials and, consequently,
		almost everything can be evaluated analytically in a straightforward manner.
		Only the EoS conversion to internal energy density given density and pressure
		$\varepsilon = \varepsilon(\rho,p)$ is in general not explicitly available.
		Therefore, solving \cref{eq:appendix_wb_rec_0010} for $p_{0,i}$ requires some
		iterative procedure such as Newton's method
		\begin{equation}
		\label{eq:appendix_wb_rec_0030}
		p_{0,i}^{(k+1)} = p_{0,i}^{(k)}
		- \frac{f(p_{0,i}^{(k)})}{f'(p_{0,i}^{(k)})}
		, \quad (k=0,1,\dots)
		\end{equation}
		where the superscript in parenthesis labels the iteration number and the
		derivative of \cref{eq:appendix_wb_rec_0020} is given by
		\begin{equation}
		\label{eq:appendix_wb_rec_0040}
		f'(p)
		= - \frac{1}{\Delta x}
		\sum_{\alpha=1}^{N_{q}}
		\omega_{\alpha}
		~
		\frac{\del \varepsilon}{\del p}
		\left(  \rho_{i}^{\eq}(x_{i,\alpha})
		, ~ p
		+ \int_{x_{i}}^{x_{i,\alpha}}  s_{i}(\xi) ~ \d \xi
		\right)
		.
		\end{equation}
		The iteration is started with the pressure computed from the cell-averaged
		conserved variables as initial guess
		\begin{equation}
		\label{eq:appendix_wb_rec_0050}
		p_{0,i}^{(0)} = p(\hat\rho_{i},\hat\varepsilon_{i})
		.
		\end{equation}
		The iteration is stopped and the cell-centered pressure $p_{0,i}=p_{0,i}^{(k)}$ returned if the condition
		$$
		\left|\frac{f\left(p_{0,i}^{(k)}\right)}{f^\prime\left(p_{0,i}^{(k)}\right)}\right|<\tau
		$$
		is met, where we chose $\tau=10^{-13}$ in the numerical experiments conducted in this article.
		
		As is well-known, the global convergence properties of Newton's method are poor. However, it is straightforward to build a robust solver by combining it with, for example, the bisection method~(see \cite{DennisSchnabel1996,Press2007} and references therein for details).
		Such a modification was not necessary for the presented numerical examples
		using the ideal gas with radiation pressure EoS.
		
		
		
		\paragraph{Simplified approach}
		However, for many applications it might be sufficient to use a simplified approach to determine the value of $p_{0,i}$. Choose
		$$\varepsilon_{0,i}:=E^\text{rec}_i(x_i) - \half \frac{\left((\rho u)^\text{rec}_i(x_i)\right)^2}{\rho^\text{rec}_i(x_i)},$$
		which is the cell-centered internal energy computed from the CWENO reconstruction polynomials. Then apply the EoS to compute
		$$p_{0,i}:=p_\text{EoS}\left(\rho^\text{rec}_i(x_i),\varepsilon_{0,i}\right).$$ The resulting method will be referred to as 
		\dwbeosf.
		
		\subsubsection{Well-balanced property of the \tdwb}
		We summarize the well-balancing property of the \dwb method in the following theorem.
		\begin{theorem}
			\label{thm:nm_1d}
			Consider the scheme \eqref{eq:nm_1d_0040} with a consistent, Lipschitz
			continuous, and contact property fulfilling (\cref{def:contact_property})
			numerical flux $\mathcal{F}$, an $m$-th order accurate spatial reconstruction
			procedure $\mathcal{R}$, a $q$-th order accurate quadrature rule $\mathcal{Q}$, 
			the hydrostatic reconstruction $\mathcal{W}$ (\cref{eq:nm_1d_wb_0120}) and the
			standard gravitational source term discretization (\cref{eq:nm_1d_0100}). 
			
			This scheme has the following properties:
			\begin{enumerate}
				\item[(i) ] The scheme is consistent with \cref{eq:nm_1d_0010} and it is 
				$\min(q,m)$-th order accurate in space (for smooth solutions).
				\item[(ii)] The scheme is well-balanced in the sense that it exactly preserves
				a discrete hydrostatic equilibrium approximating an arbitrary
				non-periodic hydrostatic equilibrium
				$\w^{\eq} = [\rho^{\eq}, 0, p^{\eq}]^{T}$
				to $\min(q,m)$-th order accuracy (for smooth equilibrium
				$\w^{\eq}$).
			\end{enumerate}
		\end{theorem}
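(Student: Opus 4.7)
I would dispatch the two claims separately.

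\emph{Part (i), accuracy and consistency.} The scheme differs from the standard method in Section~\ref{subsubsec:stan} only through the reconstruction, so it suffices to show that $\mathcal{W}$ is a $\min(q,m)$-th order reconstruction of the conserved variables. The density branch $\rho_i^{\eq} = \rho_i^{\rec}$ inherits the $m$-th order accuracy of $\mathcal{R}$ directly. The pressure polynomial $p_i^{\eq}(x) = p_{0,i} + \int_{x_i}^{x} s_i(\xi)\,\d\xi$ is obtained by exact polynomial integration of $\rho_i^{\rec} g_i^{\rm int}$, and its anchor $p_{0,i}$ is fixed by \cref{eq:nm_1d_wb_0060} so that the $q$-th order quadrature of $\varepsilon(\rho_i^{\eq}, p_i^{\eq})$ matches $\hat{\varepsilon}_i$; together with the $m$-th order accuracy of the integrand, this forces $p_i^{\eq}$ to approximate the true pressure to order $\min(q,m)$. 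The perturbation $\delta\Q_i^{\rec}$ of \cref{eq:nm_1d_wb_0110} is then $\mathcal{R}$ applied to cell averages of $\q-\q_i^{\eq}$ computed via $\mathcal{Q}$, hence $\min(q,m)$-th order accurate. Adding the two reconstructs $\q$ to the claimed order, and consistency of $\mathcal{L}$ follows routinely from consistency and Lipschitz continuity of $\mathcal{F}$ together with the standard source quadrature.

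\emph{Part (ii), well-balancing.} The plan is to identify $\hat{\Q}_i^{\rm dhe}$ as the data for which the local equilibrium profiles $\Q_i^{\eq}$ built in Step~1 of Section~\ref{sec:discrete_wb} are mutually compatible at each interface, in the sense that $p_i^{\eq}(x_{\iph}) = p_{i+1}^{\eq}(x_{\iph}) =: p_{\iph}$, and then to verify $\mathcal{L}(\hat{\Q}_i^{\rm dhe}) = 0$ in three steps. First, since $\hat{\Q}_k^{\rm dhe} = \frac{1}{\Delta x}\mathcal{Q}_k(\Q_i^{\eq})$ for every $k \in \mathcal{S}_i$ by construction, every argument of $\mathcal{R}$ inside \cref{eq:nm_1d_wb_0110} vanishes, so $\delta\Q_i^{\rec}(x)\equiv 0$ and $\Q_i^{\rec}(x) = \Q_i^{\eq}(x)$ throughout each cell. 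Second, the interface states from either side of $x_{\iph}$ then satisfy $u = 0$ and share the pressure $p_{\iph}$, and the contact property of \cref{def:contact_property} yields $\F_{\iph} = [0,p_{\iph},0]^T$. Third, the source term \cref{eq:nm_1d_0100} has vanishing first and third components (the latter because $u\equiv 0$), while its second component is
\begin{equation*}
\Delta x\,\hat{S}_{i,2} = \int_{\Omega_i} \rho_i^{\rec}(x)\,g_i^{\rm int}(x)\,\d x = \int_{\Omega_i} \frac{\d p_i^{\eq}}{\d x}\,\d x = p_{\iph} - p_{\imh},
\end{equation*}
which exactly cancels the flux divergence $\F_{\iph,2}-\F_{\imh,2}$. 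Hence $\mathcal{L}(\hat{\Q}_i^{\rm dhe}) = 0$.

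\emph{Main obstacle.} The crux is the first step of part~(ii): the identification of $\hat{\Q}_i^{\rm dhe}$ and the interface pressure-matching condition $p_i^{\eq}(x_{\iph}) = p_{i+1}^{\eq}(x_{\iph})$ on which the whole cancellation hinges. Although $p_{0,i}$ is fixed by a strictly local $q$-th order energy-average condition, the discrete equilibrium must simultaneously make neighboring cells' pressure polynomials agree at their shared endpoint; this is a genuinely non-local compatibility statement, and checking that such a $\hat{\Q}_i^{\rm dhe}$ exists and approximates $\w^{\eq}$ to order $\min(q,m)$ is the only substantive ingredient beyond bookkeeping. Once this compatibility is in place, the contact property and the exact integration of the polynomial source do the rest.
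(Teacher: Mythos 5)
Your overall architecture coincides with the paper's: part (i) reduces to showing that $\mathcal{W}$ is a $\min(q,m)$-th order reconstruction, and part (ii) hinges on vanishing perturbations, the contact property, and the exact cancellation of the momentum flux difference against the polynomially integrated source. That final cancellation computation is exactly the paper's. However, you have deferred --- and explicitly labelled as ``the main obstacle'' --- precisely the step that constitutes the bulk of the paper's argument for (ii): the existence of a discrete equilibrium $\hat\Q^{\rm dhe}$ that is simultaneously (a) consistent across cells, in the sense that the cell averages in every cell $k$ of the stencil agree with the quadrature of the profile extrapolated from cell $i$, and (b) a $\min(q,m)$-th order approximation of $\w^{\eq}$. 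The paper constructs it by anchoring $p_{0,i}=p^{\eq}(x_i)$ in one cell and propagating to all others through the interface-pressure-matching relation \cref{eq:wbcondition} with the piecewise source $s_h$; non-periodicity of the equilibrium is needed exactly here, because the propagation around a periodic domain need not close up. Without this construction, claim (ii) has no content, so as written the proposal establishes only the easy half of the theorem.

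A second ingredient you omit entirely is the reciprocity/uniqueness step. Your assertion that ``every argument of $\mathcal{R}$ vanishes'' presupposes that, when the scheme is run on $\hat\Q^{\rm dhe}$, Step~1 reproduces the intended profiles $\Q_i^{\eq}$. But the scheme determines $p_{0,i}$ by solving the energy-average equation \cref{eq:nm_1d_wb_0060}, not by the extrapolation; one must therefore show that this equation admits the anchored value as its \emph{unique} solution. For the ideal gas this is immediate from the explicit formula \cref{eq:nm_1d_wb_0080}; for a general EoS the paper invokes strict monotonicity of $f$ in \cref{eq:appendix_wb_rec_0020}, i.e.\ positivity of the Gr\"uneisen coefficient away from phase transitions (\ref{sec:appendix_wb_rec}). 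Finally, in part (i) your sentence ``hence $\min(q,m)$-th order accurate'' for $\delta\Q_i^{\rec}$ skips the point the paper is careful about: the perturbation cell averages involve the equilibrium profile extrapolated over the whole stencil $\mathcal{S}_i$, and one needs Lipschitz continuity of $\mathcal{R}$ together with smoothness of that extrapolated profile (and of the EoS) to conclude that the reconstructed perturbation converges to $\tilde E-\tilde\varepsilon_i^{\eq}$ at the claimed rate. None of these gaps call for a different idea --- your route is the paper's route --- but each must be filled for the argument to close.
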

		\begin{proof}
			(i) 
			\refb{The overall accuracy of the scheme is determined by the accuracy of
			reconstruction and source term integration. It is straightforward to show the source term discretization is $\min (q,m)$-th order accurate. The order of the accuracy of the hydrostatic reconstruction in energy, however, requires a discussion.
			
			Assume a smooth solution $\tilde q=\left( \tilde{\rho},\tilde{\rho u},\tilde E \right)^T$ at a fixed time $t$ (which we omit in the following), with the pressure value $\tilde p_{0,i}:=\tilde E(x_i)-\tilde {(\rho u)}(x_{i})^2/\left(2\tilde{\rho}(x_i)\right)$ at the point $x_i$. The functions $\tilde p_i^\eq$ and $\tilde \varepsilon_i^\eq$ defined by $\tilde p_i^\eq(x):=\tilde p_{0,i}+\int_{x_i}^x \tilde{\rho}(\xi)g(\xi)\,d\xi$ and $\tilde \varepsilon_i^\eq(x):=\varepsilon_\text{EoS}\left(\tilde{\rho}(x),\tilde p_i^\eq (x)\right)$ respectively are then smooth functions provided that the EoS is smooth.
			
			In the following step we show that the piecewise smooth equilibrium energy profile $\varepsilon_i^\eq$ that is used in our method approximates the smooth profile $\tilde\varepsilon_i^\eq$ to $\min(q,m)$-th order: By construction, the cell-averaged internal energy estimate $\hat{\varepsilon}_i$ (\cref{eq:nm_1d_wb_0070}) is $m$ order accurate and $p_{0,i}$ obtained as described in \cref{eq:nm_1d_wb_0080} or the iterative procedure in \cref{sec:discrete_wb_eos} approximates $\tilde p_{0,i}$ to $\mu:=\min(q,m)$-th order. Consequently, we have
			\begin{align*}
				 p_i^\eq (x)
				 &= p_{0,i}+\int_{x_i}^x {\rho}^\rec(\xi)g^\text{int}(\xi) \,d\xi
				\\
				&= \tilde p_{0,i} + \order\left((x-x_i)^\mu\right)+\int_{x_i}^x \left(\tilde{\rho}(\xi)+ \order\left((x-x_i)^m\right)\right)\left(g(\xi) + \order\left((x-x_i)^m\right)\right)\,d\xi
				\\
				&= \tilde p_{0,i}+\int_{x_i}^x \tilde{\rho}(\xi)g(\xi)\,d\xi
				 + \order\left((x-x_i)^\mu\right)
				 = \tilde p_i^\eq(x) + \order\left((x-x_i)^\mu\right)
			\end{align*}
			and thus
			\begin{align*}
				\varepsilon_i^\eq (x)
				&= \varepsilon_\text{EoS}\left(\rho^\rec(x),p_i^\eq(x)\right)
				\\
				&= \varepsilon_\text{EoS}\left(\tilde\rho(x) + \order\left((x-x_i)^\mu\right),\tilde p_i^\eq(x)+\order\left((x-x_i)^\mu\right)\right)
				\\
				&= \varepsilon_\text{EoS}\left(\tilde\rho(x),\tilde p_i^\eq(x)\right)+\order\left((x-x_i)^\mu\right)
			\end{align*}
			for sufficiently smooth EoS.
			
			With this we can finally show the $\mu$-th order accuracy of the hydrostatic reconstruction in the energy:
			\begin{align}
			E^{\rec}_i(x) &= \varepsilon^{eq}_i(x) + \mathcal R\left(x;\left\{\hat E_k-\frac1{\Delta x}\mathcal Q_k\left( \varepsilon^{\eq}_i\right)\right\}_{j\in\mathcal S_i}\right) \nonumber\\
			&= \tilde  \varepsilon^{eq}_i(x)+\order\left(\Delta x^\mu\right)+ \mathcal R\left(x;\left\{\hat E_k-\frac1{\Delta x}\int_{\Omega_k} \varepsilon^{\eq}_i(\xi)\, d\xi + \order\left(\Delta x^\mu\right)\right\}_{j\in\mathcal S_i}\right) \nonumber\\
			\label{eq:proof_1}
			&= \tilde  \varepsilon^{eq}_i(x)+ \mathcal R\left(x;\left\{\hat E_k-\frac1{\Delta x}\int_{\Omega_k} \varepsilon^{\eq}_i(\xi)\, d\xi \right\}_{j\in\mathcal S_i}\right)+\order\left(\Delta x^\mu\right) \\
			\label{eq:proof_2}
			&= \tilde \varepsilon^{eq}_i(x) + \left(\tilde E(x)-\tilde \varepsilon_i^{eq}(x)\right) + \mathcal O(\Delta x^\mu)
			 = \tilde E(x) + \mathcal O(\Delta x^\mu)
			\end{align}
			for $x\in\Omega_i$. In \cref{eq:proof_1} the Lipshitz continuity of the reconstruction $\mathcal R$ (which is given for  consistent reconstruction methods on smooth flows) is applied and the step to \cref{eq:proof_2} holds because $\varepsilon_i^\eq$ is a smooth function in the whole reconstruction stencil $\mathcal S_i$ and $\mathcal R$ is $m$-th order accurate. This concludes the proof that the hydrostatic reconstruction in the energy is $\mu=\min(q,m)$-th order accurate.
				
			The consistency of the scheme follows directly from the order of accuracy.
			}
			
			(ii) The proof of this item consists of two parts.
			First, we construct a discrete equilibrium, consistent with the local
			equilibrium reconstruction procedure $\mathcal{W}$, and show that it
			approximates an arbitrary hydrostatic equilibrium with high-order accuracy.
			Second, we show that the just constructed discrete hydrostatic equilibrium is
			exactly preserved by the scheme.
			
			We begin by part one.
			Let an arbitrary (but smooth enough) hydrostatic equilibrium be given
			\begin{equation*}
			{\w}^{\eq}(x) = [\rho^{\eq}(x), 0,   p^{\eq}(x)]^{T}
			\end{equation*}
			with gravitational acceleration $g(x)$.
			The corresponding equilibrium conserved variables are then
			\begin{equation*}
			 \q^{\eq}(x) = [ \rho^{\eq}(x), 0, \varepsilon_\text{EoS}( \rho^{\eq}(x), p^{\eq}(x))]^{T}
			.
			\end{equation*}
			We stress that these are exact profiles\footnote{\refb{Note that $\rho^{\eq}(x)$ is the exact hydrostatic profile, which is generally unknown, whereas $\rho^{\eq}_i(x)$ is a locally reconstructed profile.}}.
			Let the density cell averages in every cell be given by the $q$-th order
			accurate quadrature rule $\mathcal{Q}_{i}$
			\begin{equation*}
			\hat{{\rho_{i}}} = \frac{1}{\Delta x} \mathcal{Q}_{i}( \rho^{\eq})
			.
			\end{equation*}
			By applying the $m$-th order accurate standard reconstruction procedure
			$\mathcal{R}$ to the density cell averages $\hat{{\rho}}_{i}$,
			\begin{equation*}
			\rho_{i}^{\rec}(x) = \mathcal{R}\left(x; \{\hat{{\rho}}_{k}\}_{k\in\mathcal S_{i}}\right)
			,
			\end{equation*}
			we obtain a $\min(q,m)$-th order accurate approximation of $ \rho^{\eq}(x)$
			within every cell $\Omega_{i}$.
			Because the local equilibrium density profile $\rho_{i}^{\eq}(x)$
			coincides with $\rho_{i}^{\rec}(x)$ in the scheme, it approximates
			$ \rho^{\eq}(x)$ with the same accuracy.
			
			Let us now focus on a particular cell $\Omega_{i}$ and anchor the local
			equilibrium pressure profile at its center by setting
			$p_{0,i} =  p^{\eq}(x_{i})$ in \cref{eq:nm_1d_wb_0050}:
			\begin{equation*}
			p_{i}^{\eq}(x) 
			=  p^{\eq}(x_i) + \int_{x_{i}}^{x} s_{i}(\xi) \d \xi
			.
			\end{equation*}
			We emphasize that exact integration is used in the definition of the local
			equilibrium profile.
			This is straightforward because $\rho_{i}^{\eq}(x)$ and $g_i^{\rm int}$ are simply polynomials.
			Then it is clear that the above $p_{i}^{\eq}(x)$ is a $\min(q,m)$-th order
			approximation of $ p^{\eq}(x)$ within this particular cell.
			With the local equilibrium density and pressure profile available, we readily
			obtain the internal energy density through the EoS with
			\cref{eq:nm_1d_wb_0030}.
			Applying the quadrature rule $\mathcal{Q}_{i}$ as in \cref{eq:nm_1d_wb_0060},
			we obtain the cell-averaged internal energy density within cell $\Omega_{i}$.
			Note that the so obtained cell-averaged conserved variables within the $i$-th
			cell are $\min(q,m)$-th order accurate approximation of the exact cell-averaged
			equilibrium conserved variables, i.e.
			\begin{equation*}
			\hat{{\q}}_{i}^{\eq}
			= \frac{1}{\Delta x} \int_{\Omega_{i}}  \q^{\eq}(x) \d x
			= [\hat{ \rho}_{i}, 0, \hat{{\varepsilon}}_{i}]^{T}
			+ \mathcal{O}(\Delta x^{\min(q,m)})
			= \hat{{\Q}}_{i}^{\eq} + \mathcal{O}(\Delta x^{\min(q,m)})
			.
			\end{equation*}
			These are the discrete equilibrium cell-averaged conserved variables within
			this particular $i$-th cell obtained from $  \q^{\eq}(x)$.
			
			Next, we construct the discrete equilibrium cell-averaged conserved variables
			in the remaining cells, i.e. all other cells than $\Omega_{i}$.
			It would seem that one could simply repeat the above procedure for the
			internal energy density in every cell.
			However, this will not result in a consistent discrete equilibrium.
			Indeed, we need to make sure that the discrete equilibrium in all cells is
			consistent with the particular cell $\Omega_{i}$, where we anchored
			the equilibrium, and also among each other.
			To achieve this, we extrapolate the local equilibrium profile from the
			particular cell $\Omega_{i}$ to its immediate neighbors by enforcing pressure
			equality at the touching cell interfaces.
			Indeed, if the pressure is not equal at a cell interface, then there is no
			equilibrium and a net force arises.
			Operationally, this consistent equilibrium extrapolation from cell $\Omega_{i}$
			to any cell $\Omega_{j}$ can be written as
			\begin{equation}
			\label{eq:wbcondition}
			p_{0,j} 
			= p_{0,i}
			+ \int_{x_{i}}^{x_{j}} s_{h}(\xi) ~ \d \xi
			,
			\end{equation}
			where
			\begin{equation*}
			s_{h}(x) = \sum_{k} \rho^\text{eq}_k(x)g^{\rm int}_k(x) ~ \mathbbm{1}_{\Omega_{k}}(x) \qquad\textrm{and}\qquad
			\mathbbm{1}_{\Omega_{k}}(x) = \begin{cases}
			1 \text{ if } x \in     \Omega_{k} \\
			0 \text{ if } x  \notin \Omega_{k}
			\end{cases}
			\end{equation*}
			is the characteristic function of the $k$-th cell.
			This allows the definition of the discrete equilibrium cell averages in all
			cells $\hat{ \Q}_{j}^{\eq}$ consistent with the equilibrium in cell $\Omega_{i}$
			where it was anchored.
			For all the cells we have now
			\begin{equation*}
			\hat{{\q}}_{j}^{\eq}
			= \frac{1}{\Delta x} \int_{\Omega_{j}}  \q^{\eq}(x) \d x
			= \hat{\Q}_{j}^{\eq} + \mathcal{O}(\Delta x^{\min(q,m)})
			\end{equation*}
			because it is clear that
			\begin{equation*}
			 p^{\eq}(x_{j}) = p_{j,0} + \mathcal{O}(\Delta x^{\min(q,m)})
			.
			\end{equation*}
			
			Reciprocally, it is easy to obtain the same local equilibrium profiles
			from the discrete equilibrium cell-averaged conserved variables
			$\hat\Q_{i}^{\eq}$.
			To guarantee this, we need to make sure that, given such equilibrium
			cell-averaged conserved variables, the same equilibrium pressure at cell center
			$p_{0,i}$ in \cref{eq:nm_1d_wb_0050} is obtained when solving
			\cref{eq:nm_1d_wb_0060}.
			For the ideal gas EoS this is clear, because there is only solution
			\cref{eq:nm_1d_wb_0080}.
			For general EoS, this is shown in \ref{sec:appendix_wb_rec} with the
			requirement that the equilibrium within the cell is away from any phase
			transition.
			
			We now conclude this item with part two.
			Given discrete cell-averaged equilibrium conserved variables $\hat\Q_{i}^{\eq}$
			as constructed in part one, the well-balanced reconstruction procedure
			$\mathcal{W}$ guarantees that matching local equilibrium profiles
			$\Q_{i}^{\eq}(x)$ are found within each cell and that, consequently, the
			equilibrium perturbations $\delta \Q_{i}^{\rec}(x)$ vanish in all cells.
			Therefore, we have pressure equilibrium at cell interfaces
			\begin{equation*}
			p_{i  }^{\eq}(x_{i+\half})
			=   p_{0,i  }
			+ \int_{x_{i  }}^{x_{i+\half}} s_{i  }(\xi) \d \xi
			=   p_{0,i+1}
			+ \int_{x_{i+1}}^{x_{i+\half}} s_{i+1}(\xi) \d \xi
			= p_{i+1}^{\eq}(x_{i+\half})
			.
			\end{equation*}
			A contact property fulfilling numerical flux $\mathcal{F}$ then results in
			\begin{equation*}
			\mathcal{F}\left(
			\Q(\rho_{i  }^{\eq}(x_{i+\half}), 0, p_{i  }^{\eq}(x_{i+\half}))
			, \Q(\rho_{i+1}^{\eq}(x_{i+\half}), 0, p_{i+1}^{\eq}(x_{i+\half}))
			\right)
			=
			\left[ 0,p_{i+\half} , 0 \right]^{T}
			\end{equation*}
			with
			\begin{equation*}
			p_{i+\half} = p_{i  }^{\eq}(x_{i+\half}) = p_{i+1}^{\eq}(x_{i+\half})
			.
			\end{equation*}
			Plugging this into the momentum component of the scheme's flux difference
			\cref{eq:nm_1d_0040}
			\begin{equation*}
			\begin{aligned}
			\frac{1}{\Delta x}
			\left[ F_{i+\half}^{[\rho u]} - F_{i-\half}^{[\rho u]} \right]
			& = \frac{1}{\Delta x} \left[ p_{i+\half} - p_{i-\half} \right] \\
			& 
			= \frac{1}{\Delta x}
			\left[
			\left(
			p_{0,i}
			+ \int_{x_{i}}^{x_{i+\half}} s_{i}(\xi) \d \xi
			\right)
			- \left(
			p_{0,i}
			+ \int_{x_{i}}^{x_{i-\half}} s_{i}(\xi) \d \xi
			\right)
			\right]
			\\
			& 
			= \frac{1}{\Delta x}
			\int_{x_{i-\half}}^{x_{i+\half}} s_{i  }(\xi) \d \xi
			\\
			& = \hat S_{i}^{[\rho u]}
			.
			\end{aligned}
			\end{equation*}
			The density and energy component flux difference vanish as does the energy
			component source term.
			To conclude, we obtain
			\begin{equation*}
			\dt\hat\Q^{\eq}_i(t)
			= \mathcal{L}\left(\hat\Q^{\eq}_i\right)
			= - \frac{1}{\Delta x} \left[ \F_{i+\half} -\F_{i-\half}\right]
			+ \hat\S_{i}
			= 0
			\end{equation*}
			and the scheme is well-balanced as claimed.

		\end{proof}
		\begin{remark}
			Periodic hydrostatic solutions have been excluded in \cref{thm:nm_1d} since the construction of the discrete approximation which is used in the proof can fail for periodic boundary conditions. Periodic hydrostatic states are anyway academic problems, since they can not appear in real physical situations. However, the \dwb method can still be beneficially applied to periodic hydrostatic states as will be demonstrated in \cref{sec:1d_isothermal,sec:1d_isothermal_pert,sec:non-barotropic}.
		\end{remark}
		\begin{remark}
			For the equilibrium density reconstruction we chose to use the same
			reconstruction procedure $\mathcal{R}$ as used in the equilibrium perturbation
			reconstruction.
			In principle, a different reconstruction procedure $\mathcal{R}^{eq}$ for the
			equilibrium density can be used.
			However, we do not further explore this possibility in this work.
		\end{remark}
		\begin{remark}
			The schemes developed here reduce to the second-order accurate scheme presented
			in \cite{Kaeppeli2015,Kaeppeli2016} when setting the quadrature rule
			$\mathcal{Q}$ to the midpoint rule, the equilibrium density reconstruction
			$\rho_{i}^{eq}(x)$ to piecewise constant and the equilibrium perturbation
			reconstruction procedure to piecewise linear.
		\end{remark}
		
		\refb{ 
			\subsubsection{Well-balanced boundary conditions}
			\label{sec:dwb_bcs}
			
			In the following we discuss the different kinds of boundary conditions and how to realize them in order to comply with the well-balancing property as stated in \cref{thm:nm_1d}. All of the boundary conditions we discuss are based on a sufficient number of \emph{ghost cells}, which have to be added on either side of the domain and which are set to certain values before reconstruction, depending on the chosen boundary condition.
			
			\paragraph{Dirichlet boundary conditions} If the initial data of a simulation satisfy the relation \cref{eq:wbcondition} in the whole domain and in the ghost cells, Dirichlet boundary conditions can be realized by simply never updating the ghost cell values.
			
			\paragraph{Hydrostatic extrapolation}
			First, the states in the ghost cells $\Omega_j$ for $j\in\left\{1-N_{gc},...,\Omega_{0}\right\}$ (for brevity we only describe the left boundary) are set to
			$$\hat\Q_{j}=\frac1{\Delta x}\int_{\Omega_j} \Q^\text{rec}_{1+\frac{m-1}2}(x)\,dx.$$
			We compute $p_{0,1}$ according to \cref{eq:nm_1d_wb_0060} and the hydrostatic pressure
			$$p_1^\text{eq}(x)=p_{0,1}+\int_{x_1}^xs_1(\xi)\,d\xi,$$ where $s_1$ is defined as in \cref{def:source_term_dwb}. The total energy in each cell is then corrected to $$\hat E_j=\frac1{\Delta x}\int_{\Omega_j}\varepsilon\left(\rho_1^\text{rec}(x),p_1^\text{eq}(x)\right) + \frac12 \frac{\left((\rho u)_1^\text{rec}(x)\right)^2}{\rho_1^\text{rec}(x)}\, dx$$
			to achieve a well-balanced treatment of the boundaries.

			\paragraph{Solid wall boundaries}
			Before reconstructing in every cell, fill the ghost cells with the data obtained from the hydrostatic reconstruction discussed above. After reconstruction, use the following boundary fluxes:
			\begin{align*}
			\F_{\half}&= \mathcal F\left(\begin{pmatrix}
			\rho^L_1\\(\rho u)^L_1\\E^L_1
			\end{pmatrix},\begin{pmatrix}
			\rho^L_1\\-(\rho u)^L_1\\E^L_1
			\end{pmatrix}\right),
			&
			\F_{N+\half}&= \mathcal F\left(\begin{pmatrix}
			\rho^R_N\\(\rho u)^R_N\\E^R_N
			\end{pmatrix},\begin{pmatrix}
			\rho^R_N\\-(\rho u)^R_N\\E^R_N
			\end{pmatrix}\right),
			\end{align*}
			where $\left(\rho_1^L,(\rho u)^L_1,E^L_1\right)^T=\hat\Q^\text{rec}_1\left(x_{\half}\right)$
			and
			$\left(\rho^R_N,(\rho u)^R_N,E^R_N\right)^T=\hat\Q^\text{rec}_N\left(x_{N+\half}\right)$
			are obtained via the well-balanced reconstruction procedure $\mathcal W$.
		}

		\subsubsection{Stencil of the \tdwb}
		\label{sec:dwb_stencil}
		
		In the whole article, we have assumed that the order of reconstruction $m$ is  odd, due to the use of CWENO schemes.  To update the cell-average values $\hat{\Q}_i$, a standard $m$-th order method requires $\hat{\Q}_{i-\frac{m+1}2},\dots,\hat{\Q}_{i+\frac{m+1}2}$. This includes $\frac{m-1}2$ cells in each direction for the reconstruction and one for the flux computations from the reconstructed values in the $i-1,i,$ and $i+1$ cell.
		
		The \dwb method increases the stencil in the following way. The transformation to local hydrostatic variables requires the values of $s_h$ in each cell in the reconstruction stencil. This adds $\frac{m-1}2$ cells in each direction to the stencil. In total, to update the cell-average values $\hat{\Q}_i$, the methods require the values $\hat{\Q}_{i-m},\dots,\hat{\Q}_{i+m}$. The stencil is visualized in \cref{fig:stencil_dwb}. Depending on the application (especially in parallel computing using a domain decomposition), this increased stencil can lead to a considerable increase in computation time and memory requirements. As a possible solution to this problem, we propose a modified method in the next section. 
		\begin{figure}
			\centering
			\begin{tikzpicture}
			\pgfmathsetmacro{\ticklen}{0.1};
			\pgfmathsetmacro{\xval}{-5};
			\interface{\xval,0}{i-25}{};\draw (\xval ,-\ticklen)--(\xval ,\ticklen);
			\pgfmathsetmacro{\xval}{-3};
			\interface{\xval,0}{i-15}{};\draw (\xval ,-\ticklen)--(\xval ,\ticklen);
			\pgfmathsetmacro{\xval}{-1};
			\interface{\xval,0}{i-05}{};\draw (\xval ,-\ticklen)--(\xval ,\ticklen);
			\pgfmathsetmacro{\xval}{1};
			\interface{\xval,0}{i+05}{};\draw (\xval ,-\ticklen)--(\xval ,\ticklen);
			\pgfmathsetmacro{\xval}{3};
			\interface{\xval,0}{i+15}{};\draw (\xval ,-\ticklen)--(\xval ,\ticklen);
			\pgfmathsetmacro{\xval}{5};
			\interface{\xval,0}{i+25}{};\draw (\xval ,-\ticklen)--(\xval ,\ticklen);

			\cellcenter{-4,0}{i-2}{$i-2$};
			\cellcenter{-2,0}{i-1}{$i-1$};
			\cellcenter{ 0,0}{i}  {$i$};
			\cellcenter{ 2,0}{i+1}{$i+1$};
			\cellcenter{ 4,0}{i+2}{$i+2$};
			
			\draw 
			(-5,0) -- (5,0);
			
			\draw [decorate,decoration={brace,amplitude=10pt,mirror},yshift=-.5cm](-5,0.) -- (1,0.) node[black,midway,yshift=-0.5cm] { \sourcedef{i-1}};
			
			\draw [decorate,decoration={brace,amplitude=10pt,mirror},yshift=-1.3cm](-1.,0.) -- (5,0.) node[black,midway,yshift=-0.5cm] {\sourcedef{i+1}};
			
			\draw [decorate,decoration={brace,amplitude=10pt,mirror},yshift=-2cm](-3,0.) -- (3,0.) node[black,midway,yshift=-0.5cm] { \sourcedef{i}};

			\pgfmathsetmacro{\yval}{-4.1};
			
			\draw (-3,\yval+0.8) edge[parabola through={(-2,\yval+0.4)},
			thick,fill opacity=.21] (-1,\yval+0.3);
			\cellcenter{-2,\yval+0.4}{parabola1}{};	
			\draw (-1,\yval+.25) edge[parabola through={(0,\yval+0.2)},
			thick,fill opacity=.21] (1,\yval+.35);
			\cellcenter{0,\yval+0.2}{parabola2}{$\source{i}$};
			\draw (1,\yval+0.25) edge[parabola through={(2,\yval+0.5)},
			thick,fill opacity=.21] (3,\yval+01.1);
			\cellcenter{2,\yval+0.5}{parabola3}{};
			
			\pgfmathsetmacro{\ticklen}{0.1};
			\pgfmathsetmacro{\yval}{-4.6};
			\pgfmathsetmacro{\xval}{-3};\draw (\xval ,\yval-\ticklen)--(\xval ,\yval+\ticklen);
			\pgfmathsetmacro{\xval}{-1};\draw (\xval ,\yval-\ticklen)--(\xval ,\yval+\ticklen);
			\pgfmathsetmacro{\xval}{ 1};\draw (\xval ,\yval-\ticklen)--(\xval ,\yval+\ticklen);
			\pgfmathsetmacro{\xval}{ 3};\draw (\xval ,\yval-\ticklen)--(\xval ,\yval+\ticklen);
			\draw 
			(-3,\yval) -- (3,\yval);
			
			\cellcenter{-2,\yval}{j-1}{$i-1$};
			\cellcenter{ 0,\yval}{j}  {$i$};
			\cellcenter{ 2,\yval}{j+1}{$i+1$};
			
			\draw [decorate,decoration={brace,amplitude=10pt,mirror},yshift=-.5cm](-3.,\yval) -- (3,\yval) node[black,midway,yshift=-0.6cm] {\footnotesize $p_i^{\rm eq}$, ${\mathbf{Q}}^{\rm rec}_i$};
			
			\end{tikzpicture}
			\caption{Stencil of the third order accurate \tdwb method. The local hydrostatic reconstruction which yields $\Q_i^\text{rec}$ requires the source term approximations $\s_{i-1}$, $s_{i}$,$s_{i+1}$ in the $i-1$, $i$, and $i+1$ cell respectively (shown at the bottom of the figure). Each of these source term approximation has a stencil involving one neighboring cell per dimension. The total stencil to determine $\Q_i^\text{rec}$ thus involves five cells.}
			\label{fig:stencil_dwb}
		\end{figure}
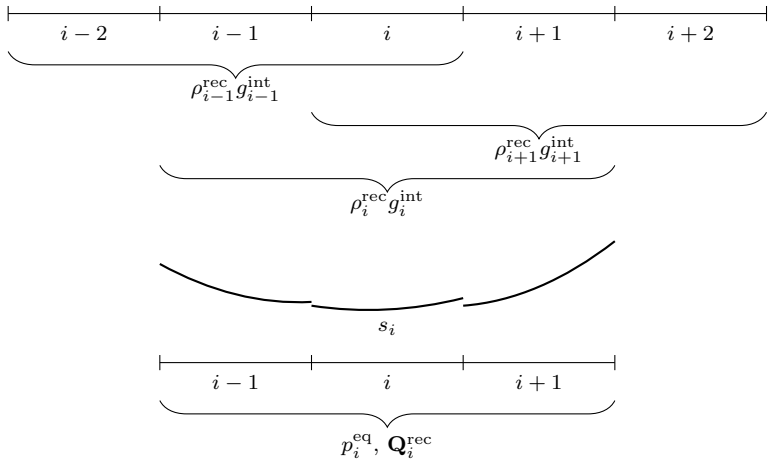
		
		\subsection{The \tla}
		\label{sec:compact_method}
		
		The reason for the increased stencil in the previous methods is that the source term has to be discretized in each cell of the CWENO stencil.  To avoid this, we will now do the following. To compute the hydrostatic pressure with respect to the $i$-th cell, we only use the source term discretization from the $i$-th cell. This definition is extended to the whole domain in a trivial way without using additional information. Consequently, there is no unique source term discretization; instead it depends on the cell in which we aim to reconstruct.
		
		To achieve this, we only have to modify \cref{def:source_term_dwb} to
		\begin{equation}
		\label{def:source_term_la}
		s_{i}(x) = \rho_{i}^{\eq}(x)g_i^{\rm int}(x)\quad\text{for}
		\quad
		x\in\bigcup_{k\in\mathcal S_i}\Omega_k
		\end{equation}
		for performing the hydrostatic reconstruction in the $i$-th cell.
		Thus, we extrapolate the source term polynomial from the $i$-th cell to the neighboring cells. This only effects the reconstruction of the energy deviations. The rest of the method remains unmodified. In the rest of the article, we refer to this modified method as local approximation (LA) method. Obviously, this method can also be applied for non-ideal EoS using the procedures described in \cref{sec:discrete_wb_eos}. The resulting methods will be referred to as \laeos when Newton's method is applied to determine $p_{0,i}$ and \laeosf if the simplified method is applied.
		
		\subsubsection{Stencil of the \tla}
		\begin{figure}
			\centering
			\begin{tikzpicture}
			\pgfmathsetmacro{\ticklen}{0.1};
			\pgfmathsetmacro{\xval}{-3};
			\interface{\xval,0}{i-15}{};\draw (\xval ,-\ticklen)--(\xval ,\ticklen);
			\pgfmathsetmacro{\xval}{-1};
			\interface{\xval,0}{i-05}{};\draw (\xval ,-\ticklen)--(\xval ,\ticklen);
			\pgfmathsetmacro{\xval}{1};
			\interface{\xval,0}{i+05}{};\draw (\xval ,-\ticklen)--(\xval ,\ticklen);
			\pgfmathsetmacro{\xval}{3};
			\interface{\xval,0}{i+15}{};\draw (\xval ,-\ticklen)--(\xval ,\ticklen);

			\cellcenter{-2,0}{i-1}{$i-1$};
			\cellcenter{ 0,0}{i}  {$i$};
			\cellcenter{ 2,0}{i+1}{$i+1$};
			
			\draw 
			(-3,0) -- (3,0);
			
			\draw [decorate,decoration={brace,amplitude=10pt,mirror},yshift=-.5cm](-3.,0.) -- (3,0.) node[black,midway,yshift=-0.5cm] {\sourcedef{i}};

			\pgfmathsetmacro{\yval}{-2.5};
			
			\draw (-3,\yval+1.) edge[parabola through={(0,\yval+0.2)},
			thick,fill opacity=.21] (3,\yval+1.3);
			\cellcenter{0,\yval+0.2}{parabola1}{$\source{i}$};	
			
			\pgfmathsetmacro{\ticklen}{0.1};
			\pgfmathsetmacro{\yval}{-3.};
			\pgfmathsetmacro{\xval}{-3};\draw (\xval ,\yval-\ticklen)--(\xval ,\yval+\ticklen);
			\pgfmathsetmacro{\xval}{-1};\draw (\xval ,\yval-\ticklen)--(\xval ,\yval+\ticklen);
			\pgfmathsetmacro{\xval}{ 1};\draw (\xval ,\yval-\ticklen)--(\xval ,\yval+\ticklen);
			\pgfmathsetmacro{\xval}{ 3};\draw (\xval ,\yval-\ticklen)--(\xval ,\yval+\ticklen);
			\draw 
			(-3,\yval) -- (3,\yval);
			
			\cellcenter{-2,\yval}{j-1}{$i-1$};
			\cellcenter{ 0,\yval}{j}  {$i$};
			\cellcenter{ 2,\yval}{j+1}{$i+1$};
			
			\draw [decorate,decoration={brace,amplitude=10pt,mirror},yshift=-.5cm](-3.,\yval) -- (3,\yval) node[black,midway,yshift=-0.6cm] {\footnotesize $p_i^{\rm eq}$, ${\mathbf{Q}}^{\rm rec}_i$};
			
			\end{tikzpicture}
			\caption{Stencil of the third order accurate \tla method. Different from the \tdwb method (see stencil in \cref{fig:stencil_dwb}), the local hydrostatic reconstruction only requires the source term approximation computed in the $i$-th cell to compute $\Q_i^\text{rec}$. The source term approximation $s_i$ is for this purpose extrapolated to the neighboring cells. Thus the total stencil of the reconstruction only includes three cells, equivalent to a non-well-balanced standard method.}
			\label{fig:stencil_la}
		\end{figure}
		In this modified method, the reconstruction routine only requires the local hydrostatic pressure polynomial from the $i$-th cell. The stencil of the method is now the same as the stencil of the standard method of the same formal order of accuracy. It is visualized in \cref{fig:stencil_la}.
		
		\subsubsection{Well-balanced property of the \tla}
		For the \la method defined in \cref{sec:compact_method}, there is no globally defined hydrostatic pressure function. Consequently, in general there is no well-defined cell-to-cell relation like \cref{eq:wbcondition}, which is balanced to machine precision. The relation only holds if the hydrostatic pressure polynomials in different cells can be described as one global polynomial. Whether the \la methods actually succeeds in significantly reducing the discretization error at hydrostatic solutions has to be tested in the numerical experiments.
		\subsection{Summary of the scheme}
		The well-balancing techniques we propose are within the framework of high order Runge--Kutta finite volume methods. To make it more evident, we outline all of the steps necessary to obtain the interface states and source term discretization from the cell-averaged states $\hat\Q_i$.
		\begin{enumerate}
			\item 
			Reconstruct density and momentum and interpolate the gravitational acceleration in each cell to obtain $\rho^\text{rec}_i$, $(\rho u)^\text{rec}_i$, and $g^\text{int}_i$.
			\item 
			Compute the source term representation $s_i$ as defined in
			\begin{itemize}
				\item[(a)] \cref{def:source_term_dwb} for the \dwb method.
				\item[(b)] \cref{def:source_term_la} for the \la method.
			\end{itemize}
			The source term discretization $\hat\S_i$ is obtained from \cref{eq:nm_1d_0100}.
			\item 
			Define the hydrostatic pressure polynomial $p_i^\text{eq}$ for the $i$-th cell according to \cref{eq:nm_1d_wb_0050}.
			For this, the value of $p_{0,i}$ is obtained from
			\begin{itemize}
				\item[(i)] \cref{eq:nm_1d_wb_0080} if an ideal gas EoS is used to close the Euler system.
				\item[(ii)] one of the methods presented in \cref{sec:discrete_wb_eos} if any other EoS is used to close the Euler system.
			\end{itemize}
			\item 
			Define the cell-averaged high order accurate representation of the equilibrium conserved variables $\hat\Q_i^\text{eq}$ for each cell $\Omega_i$ as in \cref{eq:nm_1d_wb_0010} and apply the hydrostatic reconstruction routine given in \cref{eq:nm_1d_wb_0110,eq:nm_1d_wb_0120} to obtain the interface states.
		\end{enumerate}
		We like to repeat and emphasise at this point, that all of the steps above are \emph{local} such that the methods are suitable to be implemented in parallelized codes. It is not necessary to complete each step on the whole grid before commencing to the next one.

		\section{Numerical experiments in one spatial dimension}
		\label{sec:ne}
		
		In all numerical experiments in this section we use the standard Roe flux~\cite{Roe1981}. For the third order methods we use the third order accurate CWENO3 reconstruction proposed in \cite{Kolb2014} and the third-order accurate, four
		stage explicit Runge--Kutta method from \cite{Kraaijevanger1991}. For the fifth order accurate methods we use the fifth order accurate CWENO5 reconstruction proposed in \cite{Capdeville2008} and the fifth order accurate standard Runge--Kutta method from \cite{Rabiei2012}. 
		
		The results are often compared to the results obtained with a {\em standard scheme}, \ie a non-well-balanced scheme. For this we use exactly the same methods as for the \la and \dwb scheme with the only difference that instead of the hydrostatic reconstruction procedure $\mathcal W$ (\cref{eq:nm_1d_wb_0120}) the standard reconstruction procedure $\mathcal R$ on the conserved quantities is applied.

		\subsection{Isothermal hydrostatic solution}
		\label{sec:1d_isothermal}
		We consider an isothermal hydrostatic solution of the 1-d compressible Euler equations with gravitational source term and the ideal gas equation of state (we choose $\gamma=1.4$) given by
		\begin{equation}
		\tilde\rho(x) = \tilde p(x) = \exp(-\phi(x)),
		\qquad \tilde u \equiv 0.
		\label{eq:1d_isothermal}
		\end{equation}
		which corresponds to taking the gravitational acceleration $g(x)=-\phi'(x)$. We set these initial conditions in the domain $\Omega=[0,1]$ for $\phi(x)=10 x$ with Dirichlet boundary conditions and $\phi(x)=\sin(2\pi x)$ with periodic boundary conditions. Dirichlet boundary conditions are realized via constant-in-time ghost-cells, which are initialized with the exact solution (\ie \cref{eq:1d_isothermal} in this case). 
		For this isothermal solution the speed of sound is $c=\sqrt{\gamma p/\rho}=\sqrt{\gamma}.$ The sound crossing time is defined as
		\begin{equation}
		\label{eq:soundcrossing_time}
		\tau:=\int_\Omega\frac1c dx
		\end{equation}
		which yields $\tau=\sqrt{1/\gamma}$ and we run the test up to final time $t=2\tau\approx1.7$.
		Convergence rates for the standard method and the proposed well-balanced methods can be seen in \cref{tab:1d_isothermal_O3,tab:1d_isothermal_O3_periodic} for the energy density. The density and momentum show the same trends and are not shown for brevity. The standard method shows convergence rates as expected. The well-balanced methods are not only more accurate than the standard method, they also show better convergence rate. Using the \dwb method increases the order of accuracy by one order, using the \la method increases it by two orders.
		
		The initial condition used in the above tests does not satisfy the discrete hydrostatic conditions in the theorem, so the errors are not exactly (machine) zero. We now apply the \dwb method to a slightly modified setup; we use the density given in \cref{eq:1d_isothermal} and integrate the internal energy such that it satisfies \cref{eq:wbcondition}. This initial data are a third or fifth order accurate discretization of \cref{eq:1d_isothermal} respectively. \refb{We apply the three different boundary conditions introduced in \cref{sec:dwb_bcs} and the errors can be seen in \cref{tab:1d_isothermal_discrete}.} The \dwb method maintains the discretized hydrostatic solution to machine precision. This is valid for the third as well as the fifth order methods \refb{and for all boundary conditions}. 
		
		\begin{table}
			\centering
			\caption{
				\label{tab:1d_isothermal_O3}
				$L^1$-errors and rates in total energy for an isothermal hydrostatic solution of the Euler equations after two sound crossing times computed using different methods. The setup is described in \cref{sec:1d_isothermal}, the gravitational potential is $\phi(x)=10 x$. 
			}
			\setlength\tabcolsep{3.3pt}
			\begin{tabular}{| c | c c | c c | c c | c c | c c | c c |}
				\hline
				\multirow{2}{*}{$N$} & \multicolumn{2}{c|}{Std-O3} & \multicolumn{2}{c|}{DWB-O3} & \multicolumn{2}{c|}{LA-O3} & \multicolumn{2}{c|}{Std-O5} & \multicolumn{2}{c|}{DWB-O5} & \multicolumn{2}{c|}{LA-O5} \\
				& $E$ error & rate& $E$ error & rate& $E$ error & rate& $E$ error & rate& $E$ error & rate& $E$ error & rate\\
				\hline
				128  & 1.07e-04 & \multirow{2}{*}{3.0} & 2.03e-07 & \multirow{2}{*}{4.0} & 1.65e-06 & \multirow{2}{*}{5.1} &  3.19e-07 & \multirow{2}{*}{5.0} & 6.59e-10 & \multirow{2}{*}{6.0} & 6.03e-10 & \multirow{2}{*}{6.5} \\
				256  & 1.29e-05 & \multirow{2}{*}{3.0} & 1.23e-08 & \multirow{2}{*}{4.0} & 4.95e-08 & \multirow{2}{*}{5.1}   & 1.01e-08 & \multirow{2}{*}{5.0} & 1.03e-11 & \multirow{2}{*}{6.1} & 6.60e-12 & \multirow{2}{*}{7.0} \\
				512  & 1.59e-06 &                      & 7.60e-10 &                      & 1.42e-09 &                       & 3.14e-10 &                      & 1.52e-13 &                      & 5.11e-14 &                      \\
				\hline
			\end{tabular}
			
			
		\end{table}
		\begin{table}
			\centering
			\caption{
				\label{tab:1d_isothermal_O3_periodic}
				$L^1$-errors and rates in total energy for an isothermal hydrostatic solution of the Euler equations after two sound crossing times computed using different methods. The setup is described in \cref{sec:1d_isothermal}, the gravitational potential is $\phi(x)=\sin(2\pi x)$.
			}
			\setlength\tabcolsep{3.3pt}
			\begin{tabular}{| c | c c | c c | c c | c c | c c | c c |}
				\hline
				\multirow{2}{*}{$N$} & \multicolumn{2}{c|}{Std-O3} & \multicolumn{2}{c|}{DWB-O3} & \multicolumn{2}{c|}{LA-O3} & \multicolumn{2}{c|}{Std-O5} & \multicolumn{2}{c|}{DWB-O5} & \multicolumn{2}{c|}{LA-O5} \\
				& $E$ error & rate& $E$ error & rate& $E$ error & rate& $E$ error & rate& $E$ error & rate& $E$ error & rate\\
				\hline
				128  & 3.00e-04 & \multirow{2}{*}{3.0} & 1.48e-07 & \multirow{2}{*}{4.0} & 7.02e-07 & \multirow{2}{*}{4.8} & 2.55e-07 & \multirow{2}{*}{5.0} & 5.45e-10 & \multirow{2}{*}{6.0} & 5.15e-10 & \multirow{2}{*}{6.9} \\
				256  & 3.64e-05 & \multirow{2}{*}{3.0} & 9.14e-09 & \multirow{2}{*}{4.0} & 2.50e-08 & \multirow{2}{*}{4.8} & 8.07e-09 & \multirow{2}{*}{5.0} & 8.61e-12 & \multirow{2}{*}{5.4} & 4.36e-12 & \multirow{2}{*}{4.3} \\
				512  & 4.65e-06 &                      & 5.55e-10 &                      & 9.14e-10 &                      & 2.53e-10 &                      & 2.01e-13 &                      & 2.17e-13 &                      \\
				\hline
			\end{tabular}
			
			%
			%
		\end{table}
		%
		%
		\begin{table}
			\centering
			\caption{
				\label{tab:1d_isothermal_discrete}
				$L^1$-errors for an discrete isothermal hydrostatic solution of the Euler equations after two sound crossing times computed using different methods with a resolution of 128 cells. The initial data satisfy \cref{eq:wbcondition}. The setup is described in \cref{sec:1d_isothermal}.
			}\refb{
			\begin{tabular}{|c |c |c |c |c |}
				\hline
				Method & boundary condition & $\rho$ error & $\rho u$ error  & $E$ error  \\
				\hline
				& Dirichlet & 1.28e-16 & 7.64e-17 & 7.44e-16 \\
				\dwb-O3 & hydrostatic extrapolation & 7.62e-16 &  5.52e-16 & 3.05e-15 \\
				& solid wall &  7.62e-16 &  5.52e-16 & 3.05e-15 \\
				\hline
				& Dirichlet & 1.33e-16 & 1.57e-16 & 1.67e-16 \\
				\dwb-O5 & hydrostatic extrapolation & 4.30e-16 & 3.15e-16 &1.20e-16   \\
				& solid wall & 4.30e-16 & 3.15e-16 &1.20e-16 \\
				\hline
			\end{tabular}}
		\end{table}

		
		\subsection{1-d hydrostatic solution with perturbation}
		\label{sec:1d_isothermal_pert}
		
		Now we use the periodic potential $\phi(x)=\sin(2\pi x)$ and $g(x)=-\phi'(x)$ for the isothermal solution and add a perturbation 
		\begin{align}
		\rho(x)&=\tilde{\rho}(x),& 
		u(x)   &=\tilde u(x), & 
		p(x)   &=\tilde{p}(x)+\eta \exp\left(-100\left(x-\half\right)^2\right)
		\end{align}
		with $\eta=10^{-1}$. We compute this test up to time $t=0.5$. We compare the results with a simulation obtained from a seventh order standard method on a grid of $2024$ cells. The errors and convergence rates in total energy of the standard and well-balanced methods are shown in \cref{tab:1d_isothermal_pert}. Since they show the same trend, we omit showing errors in density and momentum for brevity. All methods show rates close to the expected rates for third and fifth order convergence, respectively.
		\begin{table}
			\centering
			\caption{
				\label{tab:1d_isothermal_pert}
				$L^1$-errors and rates in total energy for the isothermal hydrostatic solution with perturbation $\eta=10^{-1}$ of the Euler equations after time $t=0.5$ computed using different methods. The setup is described in \cref{sec:1d_isothermal_pert}.
			}
			\setlength\tabcolsep{3.3pt}
			\begin{tabular}{| c | c c | c c | c c | c c | c c | c c | c c | c c | c c | c c | c c | c c |}
				\hline
				\multirow{2}{*}{$N$} & \multicolumn{2}{c|}{Std-O3} & \multicolumn{2}{c|}{DWB-O3} & \multicolumn{2}{c|}{LA-O3} & \multicolumn{2}{c|}{Std-O5} & \multicolumn{2}{c|}{DWB-O5} & \multicolumn{2}{c|}{LA-O5} \\
				& $E$ error & rate& $E$ error & rate& $E$ error & rate& $E$ error & rate& $E$ error & rate& $E$ error & rate\\
				\hline
				128  & 5.73e-04 & \multirow{2}{*}{2.6} & 7.59e-04 & \multirow{2}{*}{2.6} & 7.55e-04 & \multirow{2}{*}{2.6} & 1.16e-05 & \multirow{2}{*}{4.7} & 1.59e-05 & \multirow{2}{*}{4.7} & 1.59e-05 & \multirow{2}{*}{4.7} \\
				256  & 9.78e-05 & \multirow{2}{*}{2.5} & 1.26e-04 & \multirow{2}{*}{2.8} & 1.25e-04 & \multirow{2}{*}{2.8} & 4.49e-07 & \multirow{2}{*}{5.0} & 6.33e-07 & \multirow{2}{*}{4.8} & 6.33e-07 & \multirow{2}{*}{4.8} \\
				512  & 1.73e-05 &                      & 1.80e-05 &                      & 1.80e-05 &                      & 1.42e-08 &                      & 2.20e-08 &                      & 2.20e-08 &                      \\
				\hline
			\end{tabular}
			%
			%
			%
			%
		\end{table}
		\begin{table}
			\centering
			\caption{
				\label{tab:1d_isothermal_pert_1e-5}
				$L^1$-errors and rates in total energy for the isothermal hydrostatic solution with perturbation $\eta=10^{-5}$ of the Euler equations after time $t=0.5$ computed using different methods. The setup is described in \cref{sec:1d_isothermal_pert}.
			}
			\setlength\tabcolsep{3.3pt}
			\begin{tabular}{| c | c c | c c | c c | c c | c c | c c | c c | c c | c c | c c | c c | c c |}
				\hline
				\multirow{2}{*}{$N$} & \multicolumn{2}{c|}{Std-O3} & \multicolumn{2}{c|}{DWB-O3} & \multicolumn{2}{c|}{LA-O3} & \multicolumn{2}{c|}{Std-O5} & \multicolumn{2}{c|}{DWB-O5} & \multicolumn{2}{c|}{LA-O5} \\
				& $E$ error & rate& $E$ error & rate& $E$ error & rate& $E$ error & rate& $E$ error & rate& $E$ error & rate\\
				\hline
				128  & 2.05e-04 & \multirow{2}{*}{2.6} & 6.51e-07 & \multirow{2}{*}{4.0} & 5.47e-07 & \multirow{2}{*}{4.5} & 1.38e-07 & \multirow{2}{*}{4.9} & 2.10e-09 & \multirow{2}{*}{5.8} & 1.24e-09 & \multirow{2}{*}{5.6} \\
				256  & 3.43e-05 & \multirow{2}{*}{2.7} & 4.06e-08 & \multirow{2}{*}{4.0} & 2.49e-08 & \multirow{2}{*}{4.0} & 4.51e-09 & \multirow{2}{*}{5.0} & 3.79e-11 & \multirow{2}{*}{5.4} & 2.61e-11 & \multirow{2}{*}{5.2} \\
				512  & 5.16e-06 &                      & 2.55e-09 &                      & 1.56e-09 &                       & 1.43e-10 &                      & 8.85e-13 &                      & 7.31e-13 &                      \\
				\hline
			\end{tabular}
			%
			%
			%
			%
		\end{table}
		Next, to illustrate the capability of the well-balanced methods to capture small perturbations on the hydrostatic solution on a coarse grid, we use different grid sizes and methods for a perturbation amplitude of $\eta=10^{-5}$. The corresponding energy errors and rates at time $t=0.5$ are presented in \cref{tab:1d_isothermal_pert_1e-5} and the density perturbations from the hydrostatic state are visualized in in \cref{fig:1d_isothermal_pert_O3,fig:1d_isothermal_pert_O5}.
		\begin{figure}
			\centering
			\includegraphics[scale=1]{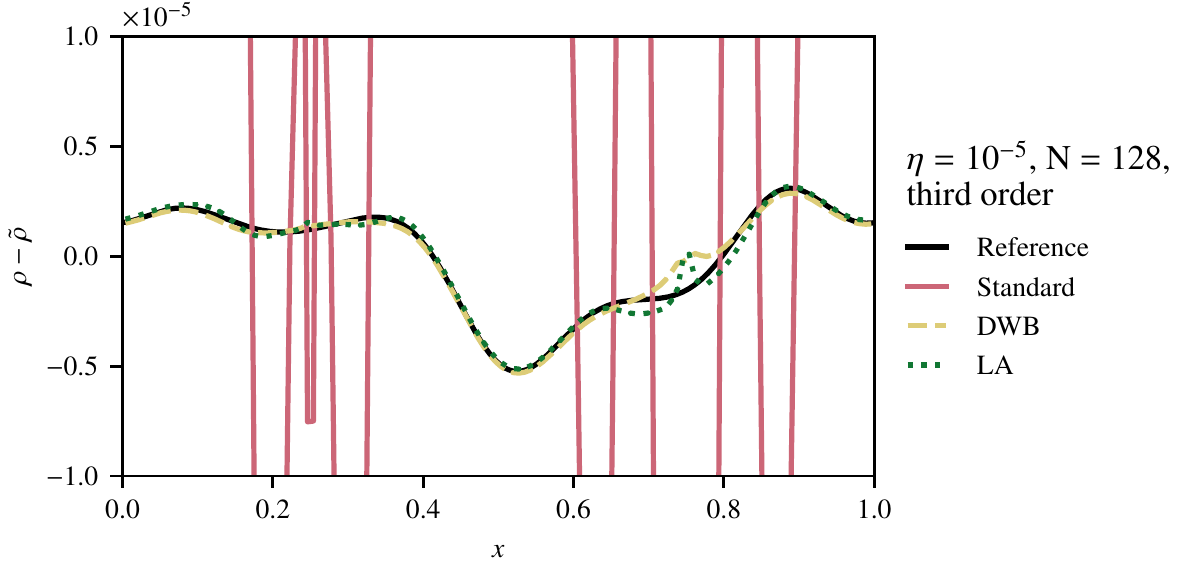}\\
			\includegraphics[scale=1]{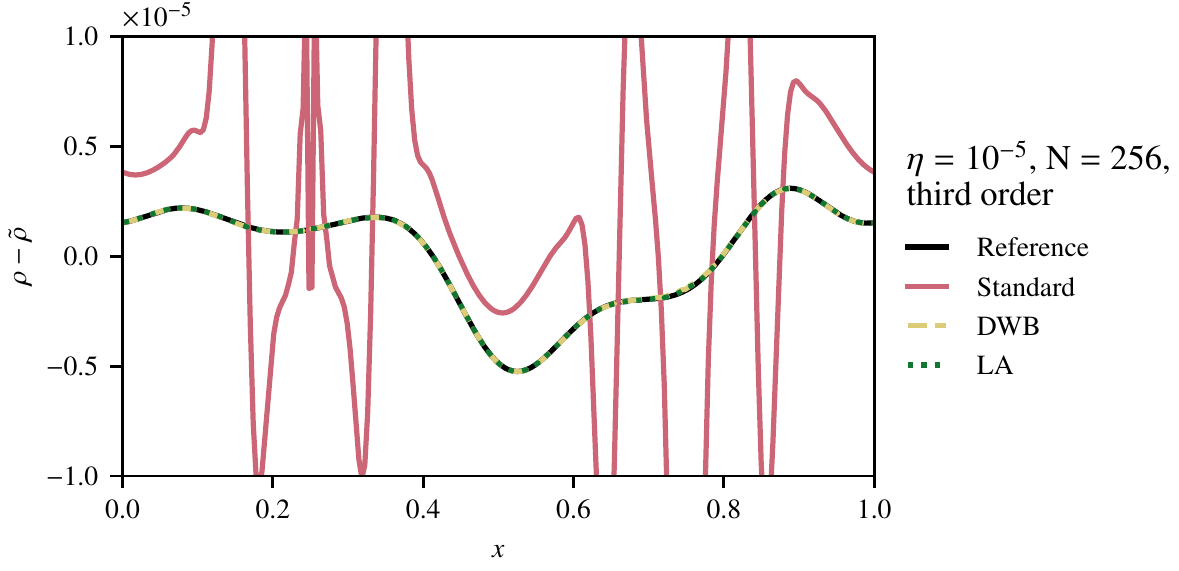}\\
			\caption{\label{fig:1d_isothermal_pert_O3} Density deviation from the hydrostatic background for $\eta=10^{-5}$ using different grids at $t=0.5$. Third order methods have been used. The test setup is described in \cref{sec:1d_isothermal_pert}.
			}
		\end{figure}
		\begin{figure}
			\centering
			\includegraphics[scale=1]{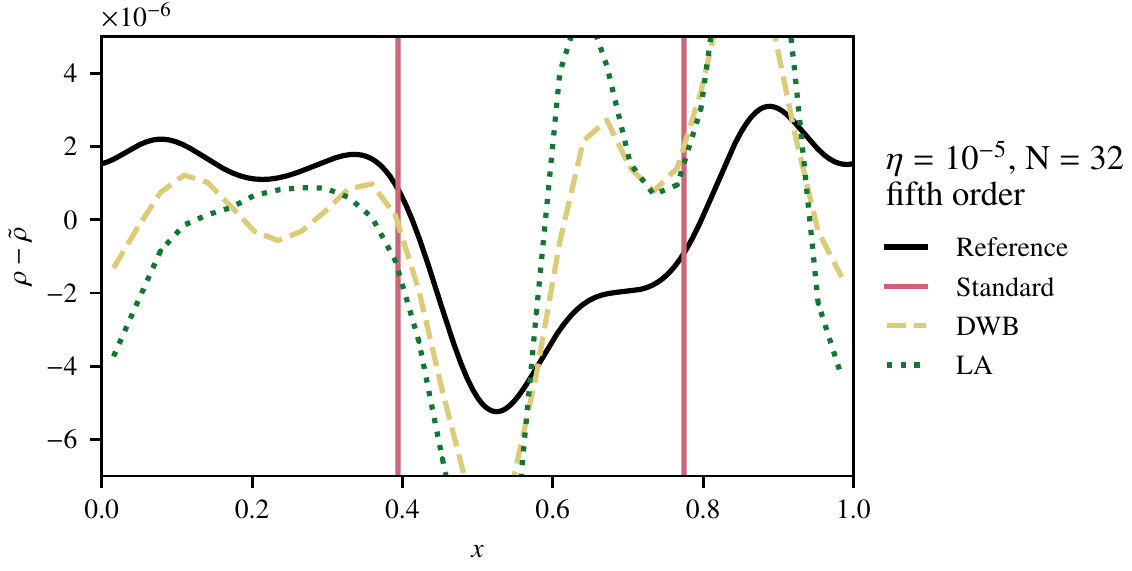}\\
			\includegraphics[scale=1]{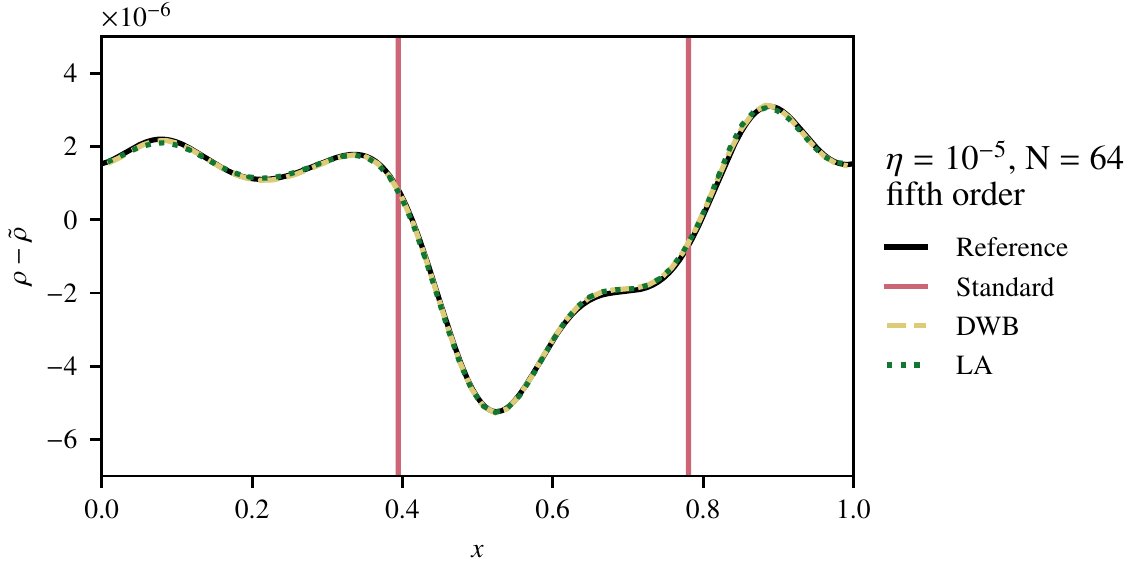}\\
			\caption{\label{fig:1d_isothermal_pert_O5} Density deviation from the hydrostatic background for $\eta=10^{-5}$ using different grids at $t=0.5$. Fifth order methods have been used. The test setup is described in \cref{sec:1d_isothermal_pert}.
			}
		\end{figure}
		The well-balanced methods succeed in resolving the perturbation significantly more accurately than the non-well-balanced standard method; the errors from the standard method are so large that they are not completely visible in the figures. The \la method is able to capture the small perturbations as accurately as the \dwb well-balanced method.
		
		\subsection{Hydrostatic solution for a non-ideal gas equation of state}
		\label{sec:1d_non_ideal}
		\subsubsection{Polytropic hydrostatic solution}
		\label{sec:non_ideal_polytropic}
		Polytropic solutions of \cref{eq:hystat} are given by
		\begin{align}
		\theta(x)&:=1-\frac{\nu-1}{\nu}\phi(x),&\tilde\rho(x) &= \theta(x)^{\frac{1}{\nu-1}},&\tilde p(x)&:=\tilde\rho(x)^\nu,
		\label{eq:polytropic}
		\end{align}
		and $u=0$. \Cref{eq:polytropic} describes a static state of the compressible Euler equations independent from the EoS. We choose the equation of state for an ideal gas which is additionally subject to radiation pressure \cite{Chandrasekhar1958}
		\begin{equation}
		p = \rho T + T^4,
		\label{eq:radiative_eos}
		\end{equation}
		where the temperature $T$ is defined implicitly via
		\begin{equation}
		\varepsilon = \frac{\rho T}{\gamma-1} + 3 T^4.
		\label{eq:radiative_epsilon}
		\end{equation}
		The conversion between pressure $p$ and internal energy density $\varepsilon$ (while knowing the density $\rho$) cannot be computed explicitly. Instead, we use Newton's method to convert between $p$ and $\varepsilon$. The speed of sound for this EoS can be computed by \begin{align}
		\label{eq:c_radpres}
		c&=\sqrt{\frac {\Gamma_1p} \rho} &
		&\text{where}&
		\Gamma_1 &= \beta + \frac{(4-3\beta)^2(\gamma-1)}{\beta+12(\gamma-1)(1-\beta)}&
		&\text{with}&
		\beta &= \frac{\rho T}{p}.
		\end{align}
		The speed of sound is also computed using Newton's method. As for the ideal gas we use $\gamma=1.4$.
		For the gravity potential we choose $\phi(x)=gx$ with constant $g=-1$. The domain is $\Omega=[0,1]$ and Dirichlet boundary conditions are applied. 
		The sound crossing time for this setup, computed from \cref{eq:soundcrossing_time} and \cref{eq:c_radpres}, is $\tau\approx0.7$. We run the test to a final time of $t=10\approx14\tau$.
		We use a standard method and the extensions of the \dwb and \la methods for general EoS as described in \cref{sec:discrete_wb_eos}. For the \dwb method we use the iterative (\dwbeos) and the fast (\dwbeosf) computation of the cell-centered pressure. For the \la method we only use the fast computation (\laeosf) since the \la method is an approximately well-balanced method anyway.
		
		The $L^1$-errors and convergence rates in total energy \refb{with respect to the initial stratification} are shown in \cref{tab:non_ideal_polytropic}. Using the well-balanced methods significantly reduces the errors (about one to two orders of magnitude). Note, that the difference between the two different versions of the \dwb method is small. This justifies the usage of the simplified and much faster computation of $p_0$. The \laeosf method performs best. For the general EoS we do not observe the increased order of convergence that was observed in \cref{sec:1d_isothermal} in the case of the ideal gas EoS.
		\begin{table}
			\centering
			\caption{%
				\label{tab:non_ideal_polytropic}%
				$L^1$-errors and rates in total energy for the polytropic hydrostatic solution with an EoS for ideal gas with radiation pressure at time $t=10$ computed using different third order accurate methods. The setup is described in \cref{sec:non_ideal_polytropic}.
			}
			
			\refb{%
			\begin{tabular}{| c | c c | c c | c c | c c |}
				\hline
				\multirow{2}{*}{$N$} & \multicolumn{2}{c|}{Std-O3} & \multicolumn{2}{c|}{DWB-O3} & \multicolumn{2}{c|}{\dwbeosf-O3} & \multicolumn{2}{c|}{\laeosf-O3} \\
				& $E$ error & rate& $E$ error & rate& $E$ error & rate& $E$ error & rate\\
				\hline
				16  & 1.01e-05 & \multirow{2}{*}{3.2} & 4.85e-07 & \multirow{2}{*}{3.2} & 4.83e-07 & \multirow{2}{*}{3.2} & 4.27e-07 & \multirow{2}{*}{3.2} \\
				32  & 1.10e-06 & \multirow{2}{*}{3.3} & 5.48e-08 & \multirow{2}{*}{3.3} & 5.42e-08 & \multirow{2}{*}{3.3} & 4.71e-08 & \multirow{2}{*}{3.3} \\
				64  & 1.08e-07 &                      & 5.65e-09 &                      & 5.65e-09 &                      & 4.69e-09 &                      \\
				\hline
			\end{tabular}%
			}
		\end{table}
		
		\subsubsection{Perturbation on a polytropic hydrostatic solution}
		\label{sec:non_ideal_polytropic_perturbation}
		
		Now we add a Gaussian perturbation to the hydrostatic solution and use the initial data
		\begin{align*}
		\rho(x)&=\tilde{\rho}(x),& 
		p(x)   &=\tilde{p}(x)+\eta \exp\left(-100\left(x-0.3\right)^2\right),
		\end{align*}
		and $u=0$
		with  $\eta=10^{-7}$. We evolve these initial data up to time $t=0.1$ with $128$ grid cells resolution. Third order standard and well-balanced methods are applied. In the well-balanced methods we use the simplified way of computing the cell-centered pressure as described in \cref{sec:discrete_wb_eos} (\dwbeosf, \laeosf). The result is shown in \cref{fig:non_ideal_polytropic_perturbation}. The reference solution is computed using the seventh order accurate exactly well-balanced method from \cite{Berberich2020} on a grid of 2048 cells.
		\begin{figure}
			\centering
			\includegraphics[scale=1]{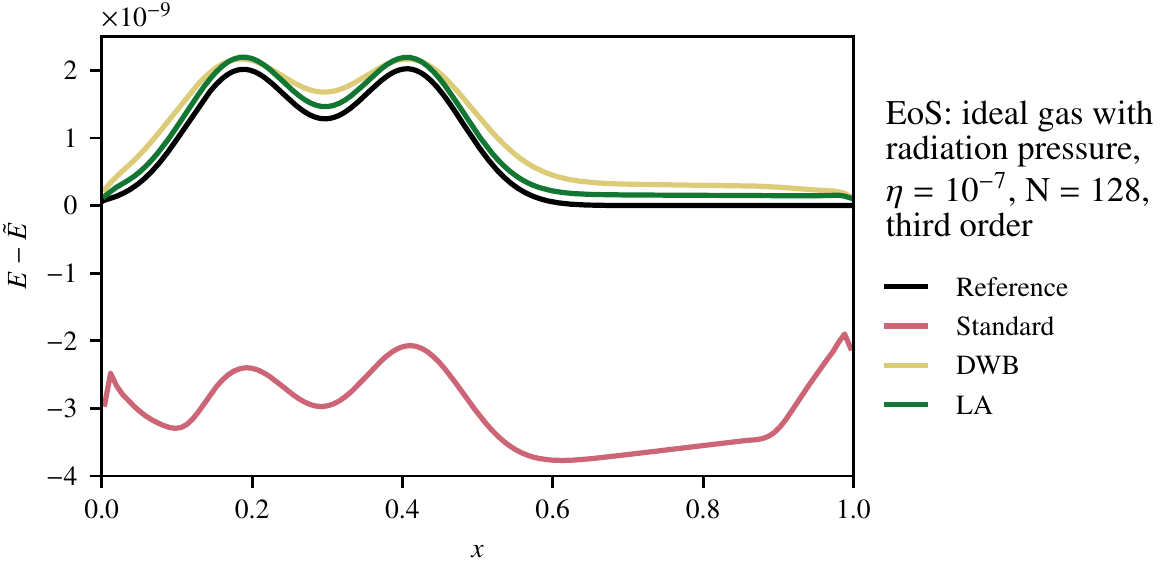}
			\caption{\label{fig:non_ideal_polytropic_perturbation}%
				Small perturbation ($\eta=10^{-7}$) on a polytropic hydrostatic solution with an EoS for ideal gas with radiation pressure after time $t=0.1$ computed using different third order accurate methods. The setup is described in \cref{sec:non_ideal_polytropic_perturbation}. Energy perturbations $E-\tilde{E}$ are shown.
			}
		\end{figure}
		Both well-balanced methods yield results much closer to the reference solution compared to the standard method. It is evident from these results that the approximate well-balanced methods can help resolve small perturbations to hydrostatic states more accurately even with an equation of state different from ideal gas. 
		
		\subsection{Riemann problem on an isothermal hydrostatic state}
		\label{sec:robustness}

		In this test we use the initial data
		\begin{align}
		\label{eq:hystatWithRiemann_rho}
		\tilde\rho(x)&:=\left\{\begin{matrix}
		a c\exp(-a\phi(x)) & \text{if $x<x_0$},\\
		b\exp(-b\phi(x)) & \text{if $x\geq x_0$},
		\end{matrix}\right.\\
		\label{eq:hystatWithRiemann_p}
		\tilde p(x)&:=\left\{\begin{matrix}
		c\exp(-a\phi(x)) & \text{if $x<x_0$},\\
		\exp(-b\phi(x)) & \text{if $x\geq x_0$},
		\end{matrix}\right.
		\end{align}
		$g(x) = -\phi'(x)$,
		and \crefrange{eq:hystatWithRiemann_rho}{eq:hystatWithRiemann_p} describe a piecewise isothermal hydrostatic solution with a jump discontinuity at $x=x_0$, which gives rise to all three waves of the Euler equations; the parameters are chosen as $x_0=0.125, a=0.5, b=1, c=2$. An ideal gas EoS with $\gamma=1.4$ is applied. We set these initial data on the domain $[0,0.25]$ and evolve them to the final time $t=0.02$ using our third and fifth order methods on a grid with $128$ cells and Dirichlet boundary conditions. As a reference solution to compute the error we use a numerical solution obtained using a standard first order method with $32768$ cells. In \cref{fig:isothermal_riemann}, we see the numerical results at final time for the \la methods. No spurious oscillations are visible. Using the \dwb method leads to very similar results, hence we omit showing them for brevity.
		
		To give quantitative results, we also compute the total variation of the solution at final time for all methods. The total variation of a quantity $\alpha=\rho,\rho u,E$ of a numerical solution is defined by
		$$
		\mathrm{TV}(\alpha):=\sum_{i=1}^N |\alpha_{i}-\alpha_{i-1}|.
		$$
		In \cref{tab:isotermal_riemann_tv} we present the difference in total variation relative to the total variation of the reference solution
		\begin{equation}
		\theta(\alpha):= \frac{\mathrm{TV}(\alpha)}{\mathrm{TV}(\alpha_\text{ref})}-1.
		\label{eq:osciallation_indicator}
		\end{equation}
		A negative value of $\theta$ indicates, that the total variation is smaller than in the reference solution. A positive value of $\theta$ means that there are additional oscillations.
		In \cref{tab:isotermal_riemann_tv}, the $\theta$ values for different methods are presented together with the $L^1$ errors. All methods lead to a decrease in total variation in conserved variables. \refb{Note that there are small visible oscillations if the CWENO5 scheme is used. However, this is common for CWENO methods since they are not exactly TVD.}
		
		\begin{figure}
			\centering
			\includegraphics[scale=1.]{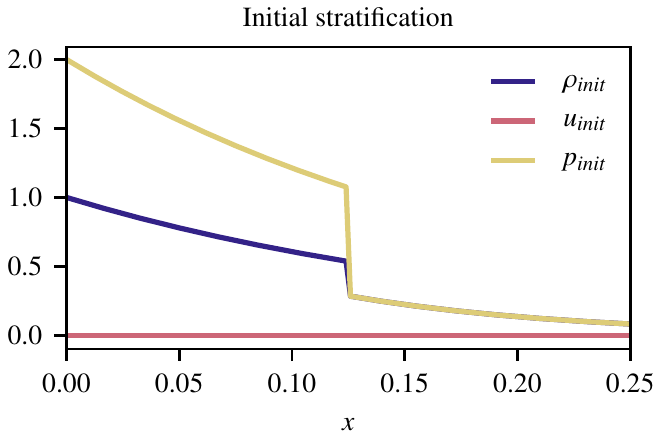}\hfill
			\includegraphics[scale=1.]{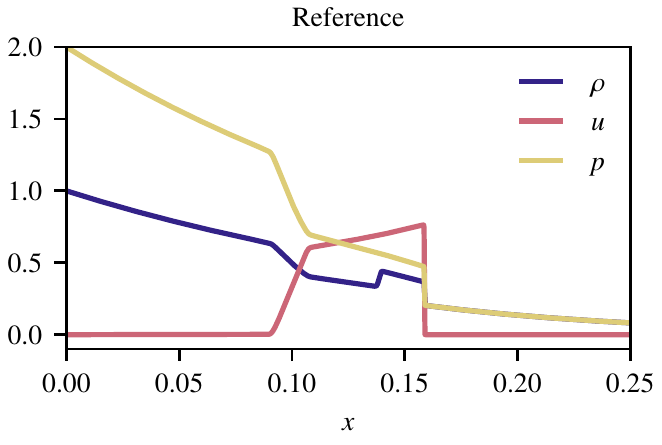}\\[.5em]
			\includegraphics[scale=1.]{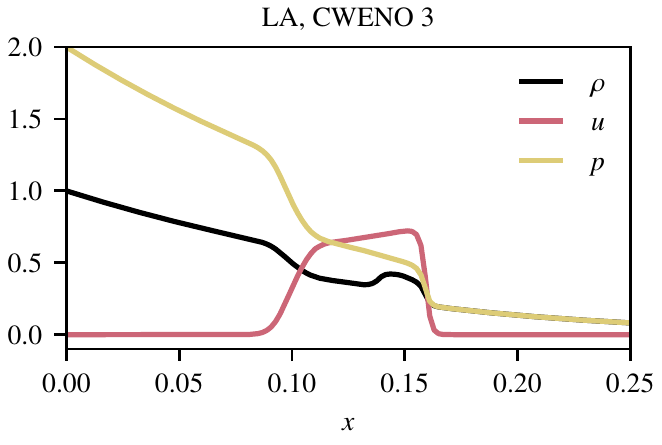}\hfill
			\includegraphics[scale=1.]{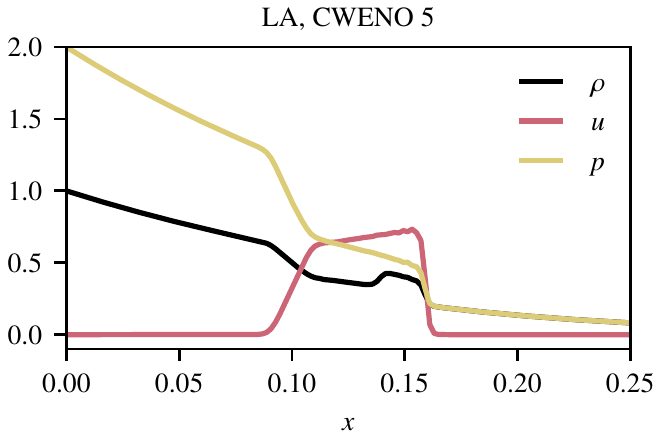}
			\caption{Initial data (top left panel), reference solution (top right panel), and simulation results (bottom panels) for the tests performed in \cref {sec:robustness}. The formally third (bottom left) and fifth (bottom right) order \la methods are used.  \label{fig:isothermal_riemann}}
		\end{figure}
		\begin{table}
			\centering
			\caption{
				\label{tab:isotermal_riemann_tv} Errors and total variation for the robustness test from \cref {sec:robustness} at final time $t=0.02$. The formally third and fifth order standard and well-balanced methods (\dwb and \la) are used. The oscillation indicator $\theta$ is defined in \cref{eq:osciallation_indicator}. \label{tab:isothermal_riemann} 
			}
			\begin{tabular}{|c |c c| c c |c c|}
				\hline
				Method  & $\rho$ error & $\theta(\rho)$ & $\rho u$ error & $\theta(\rho u)$ & $E$ error & $\theta(E)$\\
				\hline
				Std-O3&   6.51e-04 & -4.18e-02 & 8.30e-04 & -2.76e-02 & 3.25e-03 & -5.73e-06\\
				\dwb-O3 &  6.57e-04 & -3.95e-02 & 8.33e-04 & -2.70e-02 & 3.28e-03 & -5.74e-06\\
				\la-O3 &  6.56e-04 & -3.96e-02 & 8.33e-04 & -2.70e-02 & 3.28e-03 & -5.74e-06\\
				\hline
				Std-O5&  5.00e-04 & -3.67e-02 & 6.82e-04 & -2.50e-02 & 2.79e-03 & -5.74e-06\\
				\dwb-O5 &  4.76e-04 & -3.35e-02 & 6.48e-04 & -2.35e-02 & 2.71e-03 & -5.74e-06\\
				\la-O5 &  4.76e-04 & -3.36e-02 & 6.47e-04 & -2.38e-02 & 2.71e-03 & -5.74e-06\\
				\hline
			\end{tabular}
		\end{table}
		
		\refb{
		\subsection{Relaxation towards an unknown
hydrostatic state}
		\label{sec:non-barotropic}
		All the previous test cases have been based on isothermal or polytropic hydrostatic solutions, i.e.\ hydrostatic solution for which the majority of the well-balanced methods in literature can be applied. In this section we aim to show that the proposed methods can also be applied in cases in which the hydrostatic solution is not isentropic, polytropic, or isothermal and is also not known a priori. This ensures that the test case is suitable for neither well-balancing methods of type 1 (\cref{def:wb_type1}) nor type 2 (\cref{def:wb_type2}).
		
		For this purpose, we start with initial conditions that are \emph{not} in hydrostatic state and let the solution evolve using Euler equations with gravitational source term, an ideal gas EoS, and  momentum damping as described below until it reaches a stationary state. We choose
		\begin{align}
			\phi(x)&:= 10 \sin(2\pi x),& 
			\rho(t=0,x)&:= 1,& 
			p(t=0,x)&:= 1,& 
			u(t=0,x)&:= 0
		\end{align}
		on the domain $[0,1]$ with periodic boundary conditions. In order to drive the solution to a static state, we apply the simple momentum damping operator that implements the damping 
		$$
			\tdt(\rho u)=-\delta(\rho u)
		$$ 
		in a operator-split fashion in each Runge--Kutta sub-step with the damping-coefficient $\delta=0.2$.
		We apply the standard, \dwb, and \la method on 128 and 256 cells. The maximal velocity on the grid over time is shown in the top panel of \cref{fig:non-barotropic}. It gets evident that the standard method fails to settle to a static configuration. Using the \la method significantly reduces the velocities in the final configuration and under refinement of the grid the improved convergence behavior gets evident once more. Using the \dwb method allows the simulation to settle to a static stratification with velocities of the size of the machine error. The bottom panel shows pressure over density in a log-log diagram for the final stratification obtained using the different methods with different resolutions. It gets evident that density and pressure can not be related by a power law which shows that the stratification is not isothermal, isentropic, or polytropic. Furthermore, the figure shows that the well-balanced methods significantly reduce the grid-dependency of the final stationary state.
		\begin{figure}
			\centering
			\includegraphics[scale=1]{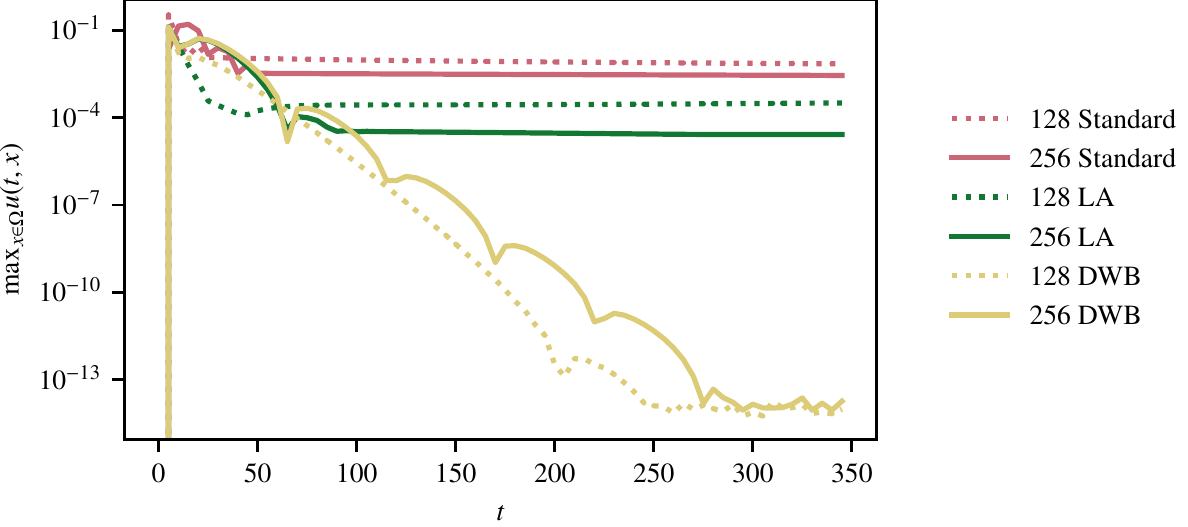}
			\\
			\includegraphics[scale=1]{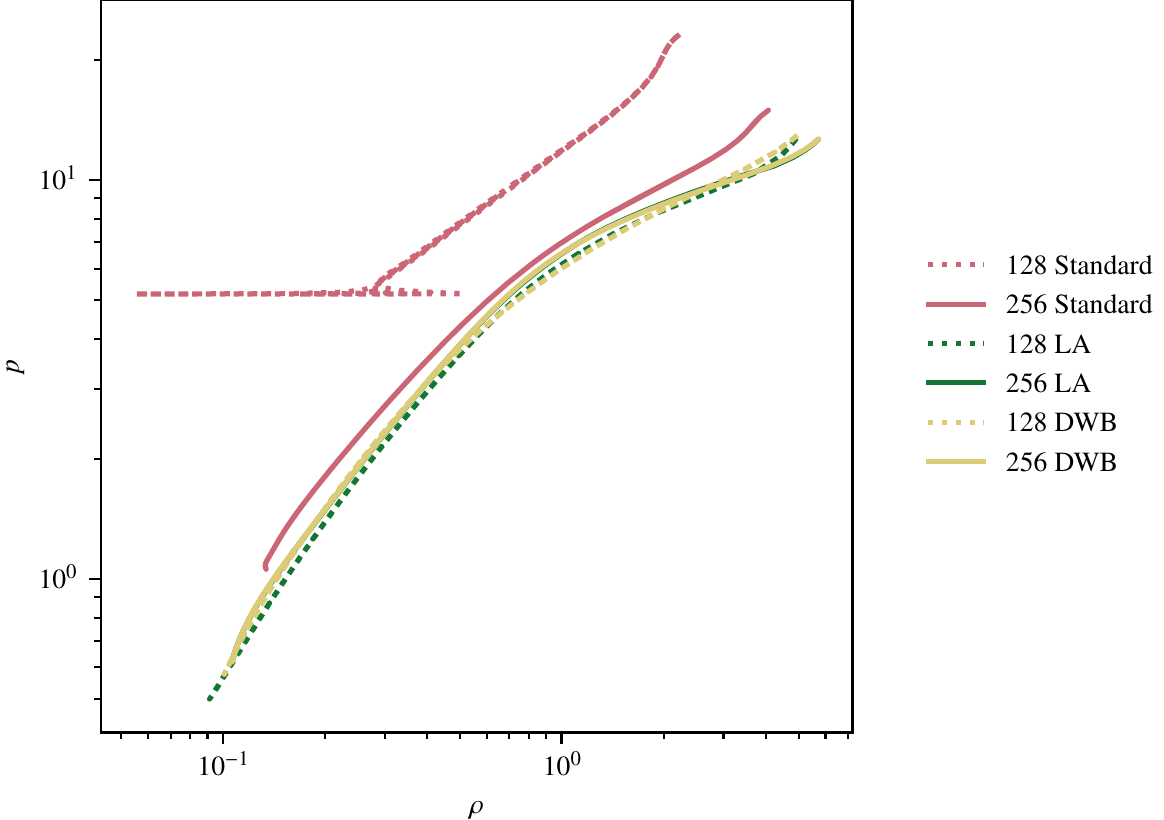}
			\caption{\label{fig:non-barotropic}\refb{%
					Data resulting from the test case described in \cref{sec:non-barotropic}. The stratification is driven to a hydrostatic state by damping. Top: Maximal velocity over time. The standard method never settles at hydrostatic state since the velocities never vanish. The \la method settles at a significantly lower level while the \dwb method is damped to a discrete static state with velocities on machine error. Bottom: Pressure over density in the final stratification in a log-log diagram. This verifies that there is no isothermal, isentropic, or polytropic relation in the hydrostatic state that is finally achieved in this test case.}}
		\end{figure}

		}
		\section{Extension to two spatial dimensions}\label{sec:2d}
		\subsection{Two-dimensional compressible Euler equations with gravity}\label{sec:eul_2d}
		The two-dimensional compressible Euler equations which model the balance laws of mass, momentum, and energy under the influence of gravity are given by
		\begin{equation}
		\dt{\q}+ \dx{\vec f_x} + \dy{\vec f_y} = \s,
		\label{eq:eul2d_cartesian}
		\end{equation}
		where the conserved variables, fluxes and source terms are
		\begin{align}
		\label{eq:euler2d}
		\q &= \begin{pmatrix}
		\rho \\ \rho u \\ \rho v \\ E \end{pmatrix}, &
		\f_x &= \begin{pmatrix}
		\rho u \\
		p + \rho u^2 \\
		\rho u v \\
		(E+p)u \end{pmatrix}, & 
		\f_y &= \begin{pmatrix}
		\rho v \\
		\rho u v \\
		p + \rho v^2 \\
		(E+p)v \end{pmatrix}, &
		\s &= \begin{pmatrix}
		0 \\
		\rho \gx\\
		\rho \gy\\
		\rho (u \gx +  v \gy)
		\end{pmatrix}
		\end{align}
		with $\rho,p>0$.
		Moreover, $E=\varepsilon + \tfrac{1}{2}\rho|\vec v|^2$ is the total energy density with the velocity $\vel=(u,v)^T$ and internal energy density $\epsilon$. The vector valued function $\x\mapsto\g(\x)=(\gx(\x),\gy(\x))^T=\nabla\phi(\x)$ with $\x:=(x,y)^T$ is a given gravitational field. As in the one-dimensional case the system is closed using an EoS, which relates $\rho$, $p$, and $\varepsilon$.
		\subsection{Finite volume method in two spatial dimensions}
		\label{sec:fv_2d}

		We discretize the spatial domain $\Omega:=\left[x_{\min},x_{\max}\right]\times\left[y_{\min},y_{\max}\right]$  into $N_x\times N_y$ finite control volumes $\Omega_{ij}:=\left[x_{i-\half},x_{i+\half}\right]\times\left[y_{j-\half},y_{j+\half}\right]$ for $i=1,\dots,N_x$ and $j=1,\dots,N_y$. The interface positions in the $x$ and $y$-direction are $x_{i+\half}:=x_{\min}+i\Delta x$ and $y_{j+\half}:=y_{\min}+j\Delta y$
		with $\Delta x=(x_{\max}-x_{\min})/N_x,\Delta y=(y_{\max}-y_{\min})/N_y$
		for $i=0,\dots,N_x$ and $j=0,\dots,N_y$. The cell-centered coordinates are given by $\x_{ij}=(x_i,y_j)^T=\left(\frac12\left(x_{i-\half}+x_{i+\half}\right),\frac12\left(y_{j-\half}+y_{j+\half}\right)\right)$. The compressible Euler equations with gravity (\cref{eq:eul2d_cartesian}) are integrated over each control volume and yields 
		\begin{equation}
		\dt \hat\Q_{ij} = -\frac1{|\Omega_{ij}|}
		\left[
		\F_{i+\half,j}-\F_{i-\half,j} + \F_{i,j+\half}-\F_{i,j-\half}
		\right] + \hat \S_{ij},
		\end{equation}
		where the interface fluxes are approximated by
		\begin{align}
		\F_{i+\half,j}&:=\mathcal{Q}_{y\in[y_{j-\half},y_{j+\half}]}\left( \mathcal F_x\left(\Q^\text{rec}_{ij}\left(x_{i+\half},y\right),\Q^\text{rec}_{i+1,j}\left(x_{i+\half},y\right) \right)\right)
		\approx \int_{y_{j-\half}}^{y_{j+\half}}\f_x(\q(x_{i+\half},y)) \,dy,\\
		\F_{i,j+\half}&:=\mathcal{Q}_{x\in[x_{i-\half},x_{i+\half}]}\left( \mathcal F_y\left(\Q^\text{rec}_{ij}\left(x,y_{j+\half}\right),\Q^\text{rec}_{i,j+1}\left(x,y_{j+\half}\right) \right)\right)
		\approx \int_{x_{i-\half}}^{x_{i+\half}}\f_y(\q(x,y_{j+\half})) \,dx.
		\end{align}
		Here, $\mathcal Q$ is an $m$-th order accurate quadrature rule, $\mathcal F_x$ and $\mathcal F_y$ are numerical fluxes as defined in \cref{subsubsec:stan}, i.e.\ they are consistent, Lipschitz-continuous in both arguments, and they satisfy the contact property. The reconstructed quantities $\Q^\text{rec}_{ij}$ are obtained from an $m$-th order accurate reconstruction, i.e.\ 
		$$
		\Q_{ij}^\text{rec}(\x):=\mathcal R\left(\x,\left\{\hat\Q_{kl}\right\}_{(k.l)\in\mathcal S_{ij}}\right),
		$$
		where $\mathcal S_{ij}$ is the set of all index tuples of cells in the stencil of the reconstruction.
		$\hat\S_{ij}$ is a consistent source term discretization. In practical application of the standard method (in \cref{sec:ne_2d}) we use the source term discretization which is introduced later in \cref{eq:sourceterm_discretization_2d}. The semi-discrete scheme is evolved in time using a sufficiently high order accurate Runge--Kutta method.
		\subsection{Local approximation method}\label{sec:wb_2d}
		In the tests of the one-dimensional methods, we saw that the method with reduced stencil (\la) converges better to hydrostatic states and gives more accurate results than the method with the large stencil (\dwb). Also, in two spatial dimensions, a relation like \cref{eq:wbcondition} can not be easily defined for our polynomial approximation of the source term, since the curve integral is in general path-dependent, unless the gravitational field is parallel to grid lines. This rules out the formulation of a well-balanced theorem for a genuinely two-dimensional method. Since in multi-dimensional simulations, the compactness of the stencil is usually even more important, we only extend the \la method to two spatial dimensions. These methods will not be exactly well-balanced and the numerical experiments will show if they are useful in practice.
		
		\subsubsection{Source term discretization}
		\label{sec:sourceterm_2d}
		Let us define the source term approximation
		\begin{align*}
		\s_{ij}(\x)
		:=&
		\begin{pmatrix}
		s^x_{ij}(\x)\\s^y_{ij}(\x)
		\end{pmatrix}
		:=
		\begin{pmatrix}
		\rho_{ij}^\text{rec}(\x)(\gx)_{ij}^\text{int}(\x)\\
		\rho_{ij}^\text{rec}(\x)(\gy)_{ij}^\text{int}(\x)
		\end{pmatrix},
		\\
		(\vel\cdot\s)_{ij}(x):=&(\rho u)_{ij}^\text{rec}(\x)(\gx)_{ij}^\text{int}(\x)+(\rho v)_{ij}^\text{rec}(\x)(\gy)_{ij}^\text{int}(\x),
		\end{align*}
		where $\rho_{ij}^\text{rec}$ and $(\rho\vel)_{ij}^\text{rec}:=\left((\rho u)_{ij}^\text{rec},(\rho v)_{ij}^\text{rec}\right)^T$ are $m$-th order accurate CWENO reconstruction polynomials in the $ij$-th cell. $\g_{ij}^\text{int}$ is an $m$-the order accurate interpolation polynomial from the cell-centered point values of $\g$; CWENO interpolation could be used if $\g$ is not smooth. 
		Due to the polynomial character of $\s_{ij}$ and $(\vel\cdot\s)_{ij}$ the source term integrals can be computed explicitly.
		
		The cell-averaged source term used in the finite volume method in the $ij$-th cell is hence computed as
		\begin{equation}
		\label{eq:sourceterm_discretization_2d}
		\hat{\vec S}_{ij} := \frac1{|\Omega_{ij}|}\int_{\Omega_{ij}}\begin{pmatrix}
		0\\s^x_{ij}(\x)\\s^y_{ij}(\x)\\(\vel\cdot\s)_{ij}(\x)
		\end{pmatrix} d\x.
		\end{equation}
		\subsubsection{Reconstruction}
		\label{sec:reconstruction_2d}
		
		We construct a local approximation to the hydrostatic pressure in the cell $\Omega_{ij}$. For that, we first define the local hydrostatic density $\rho_{ij}^\eq:=\rho_{ij}^\text{rec}$. To obtain the local hydrostatic pressure, we integrate the hydrostatic equation from the cell center to any point which yields
		\begin{equation}
		p_{ij}^\text{eq}(\x):=p^0_{ij}+\int_0^1\s_{ij}(\x_{ij}+(\x-\x_{ij})t)\cdot(\x-\x_{ij}) \,dt
		\label{eq:p0_relation1_2d}
		\end{equation}
		for the approximation in the cell $\Omega_{ij}$. The cell-centered pressure value $p^0_{ij}$ is determined by demanding
		\begin{equation}
		\frac1{|\Omega_{ij}|}\int_{\Omega_{ij}}\varepsilon_{ij}^\text{eq}(\x) \,d\x= \hat{\varepsilon}_{ij},
		\label{eq:p0_relation2_2d}
		\end{equation}
		where
		$$\varepsilon_{ij}^\text{eq}(\x):=\varepsilon_{\text{EoS}}\left(\rho_{ij}^\text{eq}(\x),p_{ij}^\text{eq}(\x)\right)$$is defined via the EoS, 
		and the cell-averaged internal energy density is computed using
		\begin{equation}
		\hat{\varepsilon}_{ij}:= \hat E_{ij} - \frac1{2\Delta x \Delta y}\mathcal Q_{ij}\left(\frac{\left((\rho u)^\rec_{ij}\right)^2+\left((\rho v)^\rec_{ij}\right)^2}{{\rho}^\rec_{ij}}\right).
		\label{eq:cell_averaged_internal_energy_2d}
		\end{equation}
		Note that the relation \cref{eq:cell_averaged_internal_energy_2d} is only second order accurate in general. However, on hydrostatic solutions it is exact since the momentum term vanishes in that case.
		Assuming an ideal gas, \cref{eq:p0_relation1_2d,eq:p0_relation2_2d} yield
		\begin{equation}
		\label{eq:p0_relation_ideal_2d}
		p_{ij}^0 =(\gamma-1) \hat{\varepsilon}_{ij} - \frac1{|\Omega_{ij}|}\int_{\Omega_{ij}}\int_0^1\s_{ij}(\x_{ij}+(\x-\x_{ij})t)\cdot(\x-\x_{ij}) \,dt\,d\x.
		\end{equation}
		Now that the pressure at cell center $p_{0,ij}$ is fixed, we have fully
		specified the high-order accurate representation of the equilibrium conserved
		variables in cell
		$\Omega_{ij}$:
		\begin{equation}
		\label{eq:nm_2d_wb_0090}
		\Q^{eq}_{ij}(\x) = \begin{pmatrix}
		\rho^{eq}_{ij}(\x)   \\
		0                \\
		0                \\
		\varepsilon^{eq}_{ij}(\x)
		\end{pmatrix}
		.
		\end{equation}
		Similarly, the equilibrium reconstruction of the primitive variables are
		given by
		\begin{equation}
		\label{eq:nm_2d_wb_0100}
		\W^{eq}_{ij}(\x) = \begin{pmatrix}
		\rho^{eq}_{ij}(\x) \\
		0              \\
		0              \\
		p^{eq}_{ij}(\x)
		\end{pmatrix}
		.
		\end{equation}
		We stress here that the equilibrium density reconstruction is simply the
		result provided by the standard reconstruction procedure $\mathcal{R}$.
		
		Next, we develop the high-order equilibrium preserving reconstruction procedure. To this end, \refb{as in e.g.\ \cite{LeVeque1998b}, \cite{Botta2004}}, we decompose in every cell the solution into an equilibrium and a (possibly large) perturbation part. The equilibrium part in cell $\Omega_{ij}$ is simply given by $\Q^{eq}_{ij}(x)$ of Eq. \eqref{eq:nm_2d_wb_0090} above. The perturbation part in cell $\Omega_{ij}$ is obtained by applying the standard reconstruction procedure $\mathcal{R}$ to the cell-averaged equilibrium perturbation,
		\begin{equation}
		\label{eq:nm_2d_wb_0110}
		\delta \Q_{ij}(\x)
		= \mathcal{R}
		\left(
		\x
		;
		\left\{  \hat\Q_{kl}
		- \frac1{|\Omega_{ij}|}\mathcal Q_{kl}\left(\Q^{eq}_{ij}\right) \right\}_{(k,l) \in \mathcal S_{ij}}
		\right),
		\end{equation}
		where $\mathcal Q_{ik}$ is an at least $m$-th order accurate two-dimensional quadrature rule approximating the integral over the cell $\Omega_{kl}$.
		We note that the cell average of the equilibrium perturbation in cell
		$\Omega_{kl}$ is obtained by taking the difference between the cell average
		$\hat\Q_{kl}$ and the cell average of the equilibrium $\Q^{eq}_{ij}$ in cell
		$\Omega_{kl}$.
		
		The complete equilibrium preserving reconstruction $\mathcal{W}$ is then
		obtained by the sum of the equilibrium and perturbation reconstruction
		\begin{equation}
		\label{eq:nm_2d_wb_0120}
		\Q^\text{rec}_{ij}(\x) = \mathcal{W} \left( \x ; \left\{  \hat\Q_{kl} \right\}_{(k,l) \in \mathcal S_{ij}} \right)
		= \Q^{eq}_{ij}(\x) + \delta \Q_{ij}(\x)
		.
		\end{equation}
		
		\begin{remark}
			(1) The method can be applied for arbitrary EoS by extending the modifications from \cref{sec:discrete_wb_eos} to two spatial dimensions in a straight forward way. 
			(2) The approximate well-balanced method presented in this section can be extended to three spatial dimensions without further complications.
		\end{remark}
		\section{Numerical experiments in two spatial dimensions}\label{sec:ne_2d}

		In all numerical experiments in this section we use the standard Roe flux~\cite{Roe1981}, Gauss-Legendre quadrature rules and the third order accurate CWENO3 reconstruction proposed in \cite{Levy2000b}. The semi-discrete schemes are evolved in time using a third-order accurate, four
		stage explicit Runge--Kutta method \cite{Kraaijevanger1991}.
		\refb{The two-dimensional LA method introduced in \cref{sec:wb_2d} is applied and compared to a two-dimensional standard method which is achieved by directly reconstructing the cell-averages as described in \cref{sec:fv_2d}.}
		\subsection{Two-dimensional polytrope}
		\label{sec:test_polytrope}
		In this test, we apply our two-dimensional well-balanced method (\la) on a two-dimensional polytrope.
		A polytrope is a hydrostatic configuration of an adiabatic gaseous sphere held together by self-gravitation.
		The test setup is given by \cite{Grosheintz2019}
		\begin{align}
		\label{eq:polytrope}
		\tilde \rho(\x)&:=\frac{\sin\left(\sqrt{2\pi}\, |\x|\right)}{\sqrt{2\pi}\, |\x|},
		&
		\tilde p(\x)&:=\tilde{\rho}(\x)^\gamma,
		&
		\vec g&:=-\nabla\phi(\x),
		&
		\phi(\x)&:=-2\frac{\sin\left(\sqrt{2\pi}\, |\x|\right)}{\sqrt{2\pi }\, |\x|},
		\end{align}
		where we choose $\gamma=2$ and the functions $\rho$ and $\phi$ are extended to $\x=0$ continuously. We set these initial conditions on Cartesian meshes for the domain $[-0.5,0.5]^2$ and use our third order accurate standard and \la method on these initial data. We evolve them until time $t=5$, which corresponds to approximately 6 sound crossing times. At the boundaries, we use Dirichlet boundary conditions. The resulting energy errors and convergence rates at different resolutions are presented in \cref{tab:2d_polytrope}. Using the well-balanced method significantly reduces the error, even though the gravity is not aligned with a coordinate direction in this setup. Moreover, as in previous tests, the increased order of accuracy on the hydrostatic solution is observed for the \la method.
		\begin{table}
			\centering
			\caption{
				\label{tab:2d_polytrope}
				$L^1$-errors in total energy for the 2-d polytrope described in \cref{sec:test_polytrope}.
			}
			
			\begin{tabular}{| c | c c | c c |}
				\hline
				\multirow{2}{*}{$N$} & \multicolumn{2}{c|}{Std-O3} & \multicolumn{2}{c|}{LA-O3}  \\
				& $E$ error & rate& $E$ error & rate\\
				\hline
				16  & 7.01e-05 & \multirow{2}{*}{3.0} & 9.58e-08 & \multirow{2}{*}{4.9} \\
				32  & 8.79e-06 & \multirow{2}{*}{3.0} & 3.29e-09 & \multirow{2}{*}{4.8} \\
				64  & 1.10e-06 &                      & 1.21e-10 &                      \\
				\hline
			\end{tabular}
			%
		\end{table}
		\subsection{Perturbation on the two-dimensional polytrope}
		\label{sec:test_polytrope_pert}
		
		As in \cite{Grosheintz2019}, we now add a perturbation to the polytrope to study if the application of our well-balanced methods can help resolving it more accurately.
		The initial pressure is perturbed in the following way
		$$
		p_\text{pert}(\x):=\left(1+A\,\exp\left(-\frac{|\x|^2}{0.05^2}\right)\right)\tilde p(\x).
		$$
		In our tests, we use different amplitude of the perturbation corresponding to $A=10^{-2},10^{-6},10^{-8}$. The spatial domain and numerical methods are the same as in \cref{sec:test_polytrope}. \refb{For comparison, simulations using the exactly well-balanced method from \cite{Berberich2020} have been added.} The final time is reduced to $t=0.2$, such that the perturbation can not reach the boundary. As a reference solutions to compute the errors we use simulations on a $512\times 512$ grid obtained with the well-balanced method introduced in \cite{Berberich2019}. Since this well-balanced method is exact on the hydrostatic background, the solutions are accurate enough to use them as reference. $L^1$ errors and convergence rates in total energy are presented in \cref{tab:2d_polytrope_pert} which show that the \la method is better at resolving the smaller perturbations than the standard method. \refb{The errors are comparable with the ones obtained from the exactly well-balanced method from \cite{Berberich2020} for the large and medium perturbation. On the small perturbation ($A=10^{-8}$) the exactly well-balanced method is more accurate than the \la method. The difference, however, is reduced on higher resolution due to the improved convergence of the \la method on the hydrostatic solution.} The convergence rates in density and momentum show a similar trend, hence we omit them for brevity. In \cref{fig:2d_polytrope_pert} the pressure perturbation for the test with $A=10^{-8}$ on the $256\times256$ grid is shown at final time, \refb{which again shows that the \la method is comparable to the exactly well-balanced method and thus significantly more accurate than the standard method.}
		\begin{table}
			\centering
			\caption{
				\label{tab:2d_polytrope_pert}
				$L^1$-errors and rates in total energy for perturbations of different size on the 2-d polytrope described in \cref{sec:test_polytrope_pert}. The third order accurate standard and \la methods are used.
			}
			\setlength\tabcolsep{3.3pt}
			\begin{tabular}{| c | *6{ c c |}}
				\hline
				\multirow{3}{*}{$N$} & \multicolumn{6}{c|}{$A=10^{-2}$} & \multicolumn{6}{c|}{$A=10^{-6}$} 
				\\
				\cline{2-13}
						& \multicolumn{2}{c|}{Std-O3} & \multicolumn{2}{c|}{LA-O3} & \multicolumn{2}{c|}{exact WB-O3}
						& \multicolumn{2}{c|}{Std-O3} & \multicolumn{2}{c|}{LA-O3} & \multicolumn{2}{c|}{exact WB-O3}
					 \\
				& $E$ error & rate& $E$ error & rate& $E$ error & rate& $E$ error & rate& $E$ error & rate& $E$ error & rate\\
				\hline
				64  & 2.92e-05 & \multirow{2}{*}{2.5} 
					& 3.83e-05 & \multirow{2}{*}{2.4}
					& 2.85e-05 & \multirow{2}{*}{2.5}
					& 2.63e-06 & \multirow{2}{*}{3.0} 
					& 4.28e-09 & \multirow{2}{*}{2.5} 
					& 2.85e-09 & \multirow{2}{*}{2.5}
					\\
				128 & 5.11e-06 & \multirow{2}{*}{3.0} 
					& 7.31e-06 & \multirow{2}{*}{3.0} 
					& 5.00e-06 & \multirow{2}{*}{3.1}
					& 3.35e-07 & \multirow{2}{*}{3.0} 
					& 7.58e-10 & \multirow{2}{*}{3.0} 
					& 5.01e-10 & \multirow{2}{*}{3.1}
					\\
				256 & 6.18e-07 &                      
					& 9.38e-07 &                      
					& 6.04e-07 &      
					& 4.19e-08 &               
					& 9.53e-11 &                      
					& 6.04e-11 &                     
					\\
				\hline
			\end{tabular}
			\\[1em]
			\begin{tabular}{| c | *3{ c c |}}
				\hline
				\multirow{3}{*}{$N$} & \multicolumn{6}{c|}{$A=10^{-8}$}
				\\
				\cline{2-7}
				& \multicolumn{2}{c|}{Std-O3} & \multicolumn{2}{c|}{LA-O3} & \multicolumn{2}{c|}{exact WB-O3}
				\\
				& $E$ error & rate& $E$ error & rate& $E$ error & rate\\
				\hline
				64  
				& 2.60e-06 & \multirow{2}{*}{3.0} 
				& 1.27e-09 & \multirow{2}{*}{4.0}
				& 2.85e-11 & \multirow{2}{*}{2.5}
				\\
				128 
				& 3.31e-07 & \multirow{2}{*}{3.0} 
				& 7.76e-11 & \multirow{2}{*}{4.0}
				& 5.01e-12 & \multirow{2}{*}{3.1}
				\\
				256                 
				& 4.17e-08 &                      
				& 4.79e-12 &                      
				& 6.04e-13 &                      
				\\
				\hline
			\end{tabular}
			
		\end{table}
		\begin{figure}
			\centering
			\includegraphics[scale=1]{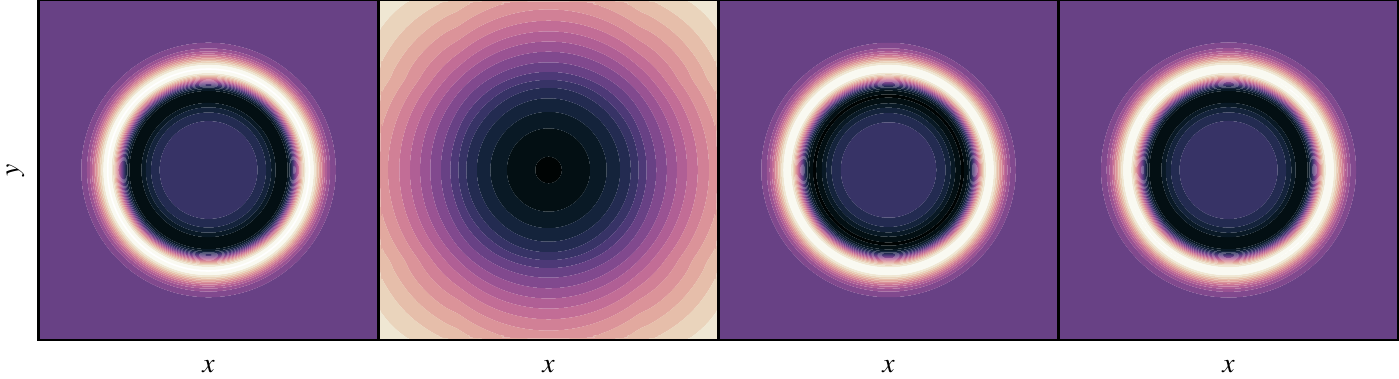}\\
			\vspace{0.7em}
			\includegraphics[scale=1]{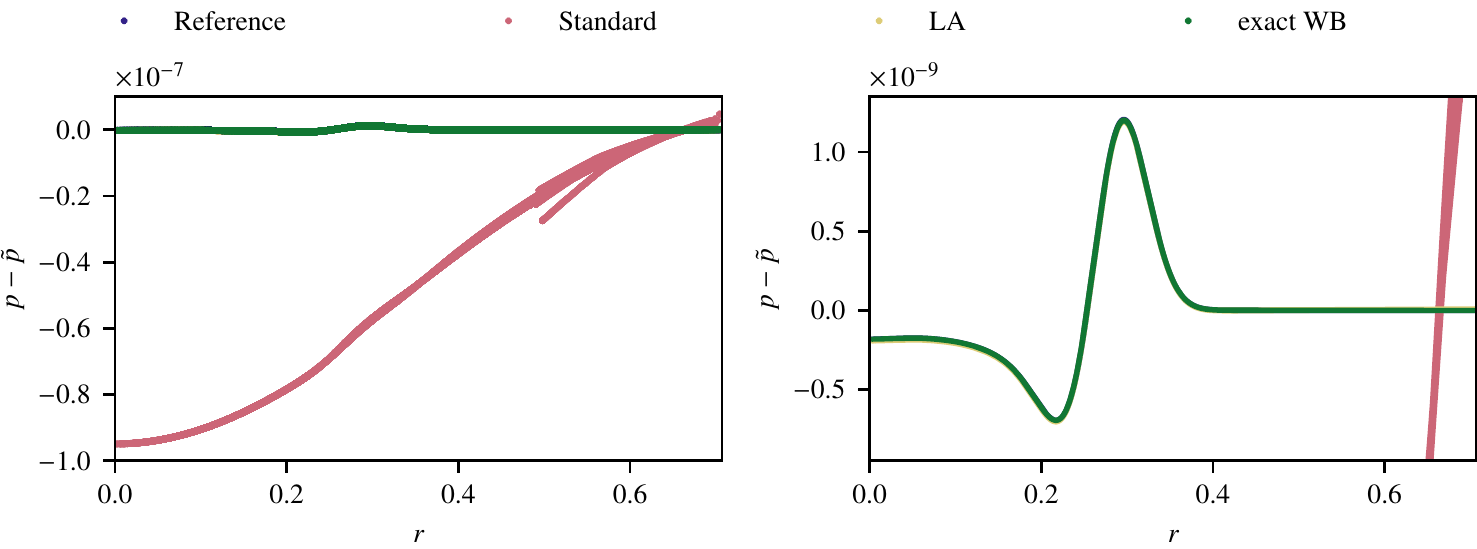}
			\caption{%
				\label{fig:2d_polytrope_pert}%
				Pressure perturbation $p-\tilde p$ from the 2-d polytrope for the initial perturbation with $A=10^{-8}$ on a $256\times256$ cells grid after time $t=0.2$. The test is described in \cref{sec:test_polytrope_pert}. The reference solution is obtained using the exactly well-balanced method from \cite{Berberich2019} on a $512\times512$ cells grid. In the top panels, the color ranges (dark to light) from -0.8e-9 to 1.2e-9 in the first and third, and fourth plot (reference, \la method, and the exactly well-balanced method from \cite{Berberich2020}) and from -1e-7 to 1e-8 in the central plot (standard method). The full domain $[-0.5,0.5]^2$ is shown. The bottom panels show the pressure perturbation over radius in a scatter plot. The difference between the two bottom panels is the range of the values at the $y$-axis. In the bottom plots, no difference can be seen between the the data from all simulations except the one using the non-well-balanced standard method.}
		\end{figure}
		
		\subsection{Efficiency of the \tla}
		\label{sec:efficiency}
		To compare the computational effort of the \la method to a standard method, we use the test case presented in \cref{sec:test_polytrope_pert} with different sizes of perturbations given by $\eta=10^{-4}$ and $\eta=10^{-8}$. The test case is run with the standard method and the \tla method with different grid resolutions ($N = 16,24,32,48,64,96,128,192$). Each of the tests is repeated $10$ times to account for fluctuations in the machine's performance. The results of these tests are visualized in \cref{fig:efficiency}. For both sizes of perturbation using the \la method is significantly more efficient. For the larger perturbation, e.g., the computation time necessary to obtain a certain accuracy is reduced by one to two orders of magnitude by using the \la method. As expected, the difference is larger for the smaller perturbation: The computation time to obtain a certain accuracy is for this perturbation reduced by two to four orders of magnitude.
		\begin{figure}
			\centering
			\includegraphics[width=\textwidth]{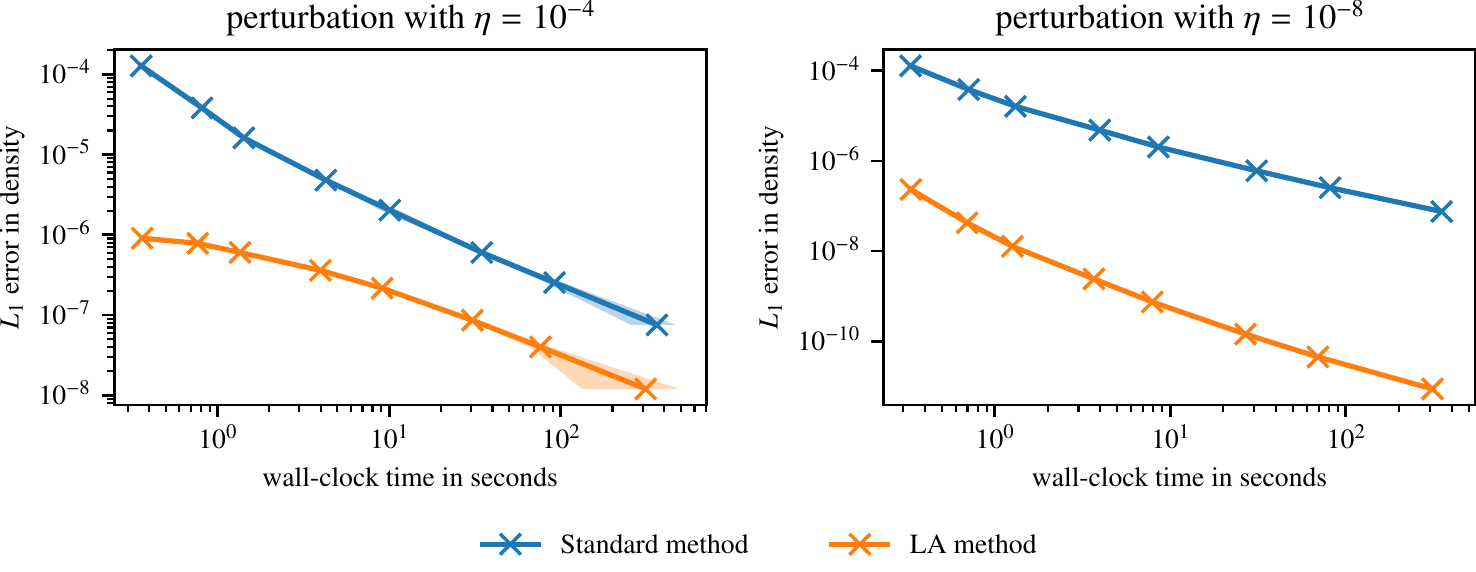}
			\caption{\label{fig:efficiency}%
				$L_1$ density error over mean wall-clock time needed to compute the result for the test introduced in \cref{sec:test_polytrope_pert} with different sizes for the perturbation (left pane: $\eta=10^{-4}$, right panel: $10^{-8}$).  Each of the crosses is obtained by computing the mean value of 10 repetitions of the same test at a given resolution  as described in \cref{sec:efficiency}.
				The shaded area shows the area of the variance in computation time. For both tests using the \tla method requires less computational effort to obtain the same accuracy than in the standard method.
			}
		\end{figure}
		
		\subsection{Radial Rayleigh--Taylor instability}
		\label{sec:rrt}
		
		In this test, we use a piece-wise isothermal hydrostatic state in the two-dimensional gravitational potential 
		\begin{equation*}
		\phi(\x):=-20\frac{\sin\left(\sqrt{2\pi}\, |\x|\right)}{\sqrt{2\pi}\, |\x|}
		\end{equation*}
		and gravitational acceleration 
		$\vec g:=-\nabla\phi(\x)$.
		The initial data are given by
		\begin{align*}
		(\rho,p)(\x)&:=
		\begin{cases}
		(\tilde\rho_\text{in},\tilde p_\text{in})(\x) & \text{ if }\|\x\|_2<r_0,\\
		(\tilde\rho_\text{out},\tilde p_\text{out})(\x)  & \text{else},
		\end{cases}
		&
		\vel(\x)&:=\begin{pmatrix}
		0\\0
		\end{pmatrix},
		\end{align*}
		where
		\begin{align*}
		\tilde{\rho}_\text{in}(\x) &:= a c \exp\left(-a\phi(\x)\right),&
		\tilde{\rho}_\text{out}(\x) &:= b \exp\left(-b\phi(\x)\right),\\
		\tilde{p}_\text{in}(\x) &:= c \exp\left(-a\phi(\x)\right),&
		\tilde{p}_\text{out}(\x) &:= \exp\left(-b\phi(\x)\right),
		\end{align*}
		and $c = \exp\left( (a-b)\phi\left((r_0,0)^T\right) \right)$. Choosing $b>a$ makes the system unstable, such that Raleigh--Taylor instabilities are expected to develop \cite{Chandrasekhar1961}.
		
		For the numerical computations, we use the above initial data with $r_0=0.2$ and $(a,b)=(1,2)$ in the domain  $[0,0.5]^2$, and evolve them until time $t=0.6$. We use the third order accurate standard and \la method on a $64\times 64$ cells grid. At the $x=0$ and $y=0$ boundaries we use wall-boundary conditions which are consistent with the symmetry of the problem. At the outer boundaries we extrapolate 
		$\left(
		\rho-\tilde\rho_\text{out},\,
		\rho u,\,
		\rho v,\,
		E - (\gamma-1)\tilde p_\text{out} 
		\right)^T$ 
		in order to not destroy the hydrostatic solution at the boundary.
		The results are visualized in \cref{fig:rrt_contourf,fig:rrt_scatter}. The simulation with the standard method crashes approximately at time $t\approx0.2981$. In \cref{fig:rrt_scatter} the relative density deviations $\rho/\tilde\rho_\text{initial}-1$ from the initial density over radius are shown at time $t=0.298$. While the \la method is capable of accurately maintaining the hydrostatic solution away from the discontinuity, there are significant spurious perturbations for the standard method, especially at the outer boundary. In \cref{fig:rrt_contourf}, the relative density deviation $\rho/\tilde\rho_\text{out}$ is presented at different times. As expected, Rayleigh--Taylor instabilities appear at the interface between the light and the dense fluid.
		\begin{figure}
			\centering
			\includegraphics[width=0.49\textwidth]{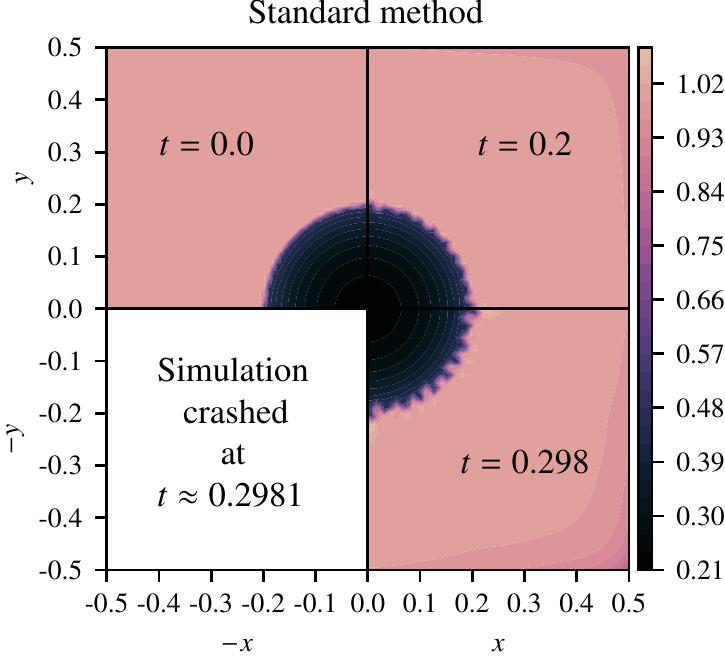}
			\hfill
			\includegraphics[width=0.49\textwidth]{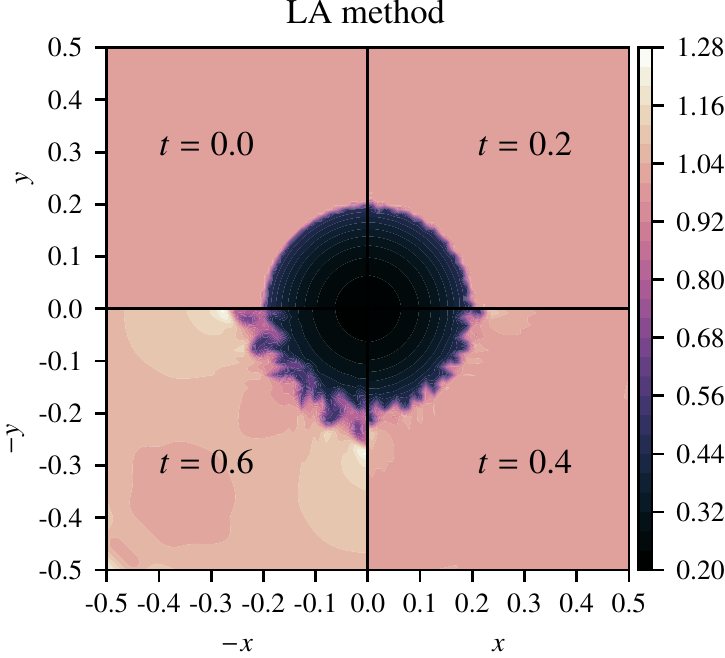}
			\caption{%
				\label{fig:rrt_contourf}%
				Relative density deviation $\rho/\tilde\rho_\text{out}$ from the outer hydrostatic state of the radial Rayleigh--Taylor instability. Setup and method are described in \cref{sec:rrt}. Different times using the standard (left) and the \la method (right). The simulation using the standard method crashes at $t\approx0.2981$.
			}
		\end{figure}
		\begin{figure}
			\centering
			
			\includegraphics[scale=1]{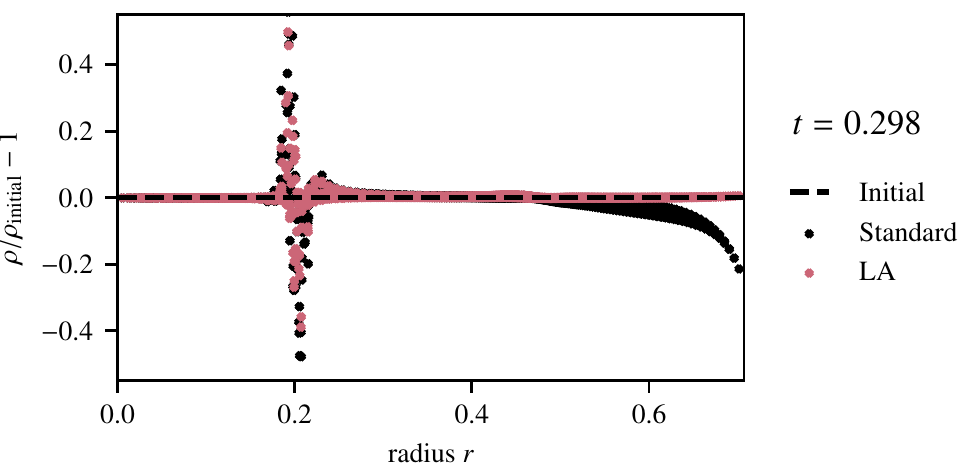}
			\caption{%
				\label{fig:rrt_scatter}%
				Relative density deviations $\rho/\tilde\rho_\text{initial}-1$ from the initial density stratification over radius at time $t=0.298$ for the standard and \la method. The simulation using the standard method crashes at $t\approx0.2981$.
			}
		\end{figure}
		\section{Conclusions and outlook}\label{sec:conclusions}
		
		Novel well-balanced high order accurate finite volume methods have been introduced in this work. The first method (\dwb method) exactly balances a high order discretization of any hydrostatic state with any EoS. The stencil of the method, however, is larger than the stencil of a standard method with the same order of accuracy. Localizing the high order approximation to the hydrostatic state in each cell (\la method) reduced the stencil to that of a comparable standard method. For the \la method the well-balanced property can not be shown analytically. However, in numerical tests it is found to be even more accurate on analytical hydrostatic solutions than the \dwb method. The \la method shows superconvergence when computing hydrostatic solutions with the ideal gas EoS; the convergence rate is increased by two. This was observed not only in one spatial dimension but also in the two-dimensional extension. 
		The high order accuracy and the robustness of the \dwb and \la method have been shown numerically. Also, numerical tests verified the capability of the methods to accurately capture small perturbations on hydrostatic states.
		
		We conclude that the proposed well-balanced methods, in particular the \la method, are especially useful in situations, in which no knowledge about the structure of the hydrostatic states, which will appear in the simulation, is available. It can be implemented in an existing code and can be used as the default method which significantly improves the accuracy at any hydrostatic state.

\section*{Acknowledgments}
Roger K\"appeli acknowledges support by the Swiss National Science Foundation (SNSF) under grant 200021-169631.
Praveen Chandrashekar would like to  acknowledge support  from the Department of Atomic Energy,  Government of India, under project no.   12-R\&D-TFR-5.01-0520.

\bibliographystyle{model1-num-names}
\bibliography{library}

\appendix
\section{Equilibrium reconstruction for general EoS}
\label{sec:appendix_wb_rec}

In the following we discuss the existence and uniqueness of a cell center
equilibrium pressure $p_{0,i} > 0$ solving \cref{eq:appendix_wb_rec_0010}.
\paragraph{Existence} We only have to show that the discrete hydrostatic pressure approximation $p^\text{eq}$ is positive. Since the actual pressure $p$ is assumed to be positive in the Euler equations, the domain $\Omega$ is compact, and the pressure is continuous in the hydrostatic state, there is a minimal pressure value $p_\text{min}$. The discrete hydrostatic pressure approximation has an error $\|p^\eq-p\|_{l^1}=\mathcal O\left(\Delta x^m\right)$. Consequently, for sufficiently small values of $\Delta x$ we have $\|p^\eq-p\|_{l^1}<p_\text{min}$ which implies $p^\text{eq}>0$.
\paragraph{Uniqueness}
Notice that the derivative of internal energy density with respect to
pressure at constant density in \cref{eq:appendix_wb_rec_0040} is a
fundamental EoS property.
This expression is related to the so-called Gr\"{u}neisen coefficient
$\Gamma$
\begin{equation}
\label{eq:appendix_wb_rec_0060}
\Gamma = \left( \frac{\del p}{\del \varepsilon} \right)_{\rho}
,
\end{equation}
which measures the spacing of the isentropes in the $p$-$V$-plane
($V = 1/\rho$ is the so-called specific volume).
The Gr\"{u}neisen coefficient is a characteristic EoS variable and it is
positive away from phase transitions (see e.g.\ \cite{MenikoffPlohr1989}).
So, if we assume that the quadrature weights are positive, the function's
derivative \cref{eq:appendix_wb_rec_0040} will always keep the same sign away
from a phase transition.
Therefore, the function whose root we are seeking
\cref{eq:appendix_wb_rec_0020} is a strictly monotone function in the pressure
variable and, if a root exists, it is unique.

\end{document}